\theoremstyle{plain}
\newtheorem{thm}{\protect\theoremname}[section]
  \theoremstyle{plain}
  \newtheorem{conjecture}[thm]{\protect\conjecturename}
  \theoremstyle{plain}
  \newtheorem{prop}[thm]{\protect\propositionname}
  \theoremstyle{remark}
  \newtheorem*{acknowledgement*}{\protect\acknowledgementname}
  \theoremstyle{plain}
  \newtheorem{lem}[thm]{\protect\lemmaname}
\newcommand{\hide}[1]{}
\DeclareMathOperator{\diam}{diam}
\DeclareMathOperator{\vol}{vol}
\DeclareMathOperator{\sdim}{sdim}
\DeclareMathOperator{\bdim}{dim_\textrm{B}}
\newtheorem{almost-theorem}[thm]{Almost-theorem}
\theoremstyle{plain}
\newtheorem{fprop}[thm]{``Proposition''}
\def\blfootnote{\xdef\@thefnmark{}\@footnotetext}
\newcommand{\rev}[1]{#1}
  \providecommand{\acknowledgementname}{Acknowledgement}
  \providecommand{\conjecturename}{Conjecture}
  \providecommand{\lemmaname}{Lemma}
  \providecommand{\propositionname}{Proposition}
\providecommand{\theoremname}{Theorem}
\begin{document}

\title{Self similar sets, entropy and additive combinatorics }

\author{Michael Hochman}
\maketitle
\begin{abstract}
This\blfootnote{Supported by ERC grant 306494}\blfootnote{\emph{2010 Mathematics Subject Classication}. 28A80, 11K55, 11B30, 11P70}
article is an exposition of the main result of \cite{Hochman2012a},
that self-similar sets whose dimension is smaller than the trivial
upper bound have ``almost overlaps'' between cylinders. We give
a heuristic derivation of the theorem using elementary arguments about
covering numbers. We also give a short introduction to additive combinatorics,
focusing on inverse theorems, which play a pivotal role in the proof.
Our elementary approach avoids many of the technicalities in \cite{Hochman2012a},
but also falls short of a complete proof; in the last section we discuss
how the heuristic argument is turned into a rigorous one.
\end{abstract}

\section{\label{sec:Intro}Introduction}

\subsection{\label{sub:Self-similar-sets}Self-similar sets}

Self-similar sets in the line are compact sets that are composed of
finitely many scaled copies of themselves. These are the simplest
fractal sets, the prototypical example being the famous middle-$\frac{1}{3}$
Cantor set $X\subseteq[0,1]$, which satisfies the ``geometric recursion''%
\footnote{The mddle-1/3 Cantor set can also be described in other ways, e.g.
by a recursive construction, or symbolically as the points in $[0,1]$
that can be written in base $3$ without the digit $1$. General self-similar
sets also have representations of this kind, but in this paper we
shall not use them.%
} relation $X=\frac{1}{3}X\cup(\frac{1}{3}X+\frac{2}{3})$, using the
obvious notation for scaling and translation of a set. In general,
a self-similar set \rev{in $\mathbb{R}$} is defined by a finite family
$\Phi=\{f_{i}\}_{i\in\Lambda}$ of maps of the form $f_{i}(x)=r_{i}x+a_{i}$,
where $0<|r_{i}|<1$ and $a_{i}\in\mathbb{R}$. The family $\Phi$
is called an \emph{iterated function system }(or\emph{ IFS}),%
\footnote{Iterated function systems consisting of non-affine maps and on other
metric spaces than $\mathbb{R}$ are also of interest, but we do not
discuss them here.%
}\emph{ }and the self-similar set they define is unique compact set
$X\neq\emptyset$ satisfying 
\begin{equation}
X=\bigcup_{i\in\Lambda}f_{i}X.\label{eq:SSSs}
\end{equation}
(existence and uniqueness are due to Hutchinson \cite{Hutchinson1981}). 

Throughout this paper we make a few simplifying assumptions. To avoid
trivialities, we always assume that $\Phi$ contains at least two
distinct maps, otherwise $X$ is just the common fixed point of the
maps. We assume that $\Phi$ has \emph{uniform contraction}, i.e.
all the contraction ratios $r_{i}$ are equal to the same value $r$.
Finally, we assume that $r>0$, so the maps preserve orientation.
These assumptions are not necessary but they simplify the statements
and arguments considerably.

\subsection{\label{sub:Conjecture}Dimension of self-similar sets}

Despite the apparent simplicity of the definition, and of some of
the better known examples, there are still large gaps in our understanding
of the geometry of self-similar sets. In general, we do not even know
how to compute their dimension. Usually one should be careful to specify
the notion of dimension that one means, but it is a classical fact
that, for self-similar sets, all the major notions of dimension coincide,
and in particular the Hausdorff and box (Minkowski) dimensions agree
(e.g. \cite[Theorem 4 and Example 2]{Falconer1989}). Thus we are
free to choose either one of these, and we shall choose the latter,
whose definition we now recall. For a subset $Y\subseteq\mathbb{R}$
denote its covering number at scale $\varepsilon$ by
\[
N_{\varepsilon}(Y)=\min\{k\,:\, Y\mbox{ can be covered by }k\mbox{ sets of diameter }\leq\varepsilon\}
\]
The \emph{box dimension }of $Y$, if it exists, is the exponential
growth rate of $N_{\varepsilon}(Y)$:
\[
\bdim Y=\lim_{\varepsilon\rightarrow0}\frac{\log N_{\varepsilon}(Y)}{\log(1/\varepsilon)}
\]
Thus $\bdim Y=\alpha$ means that $N_{\varepsilon}(Y)=\varepsilon^{-\alpha+o(1)}$
as $\varepsilon\rightarrow0$. It is again well known that the limit
exists when $Y$ is self-similar, we shall see a short proof in Section
\ref{sub:Sumset-structure-of-SSSs}.

It is easy to give upper bounds for the dimension of a self-similar
set. Taking $X$ as in (\ref{eq:SSSs}) and iterating the relation
we get 
\[
X=\bigcup_{i\in\Lambda}f_{i}(\bigcup_{j\in\Lambda}f_{j}X)=\bigcup_{i,j\in\Lambda}f_{i}\circ f_{j}(X)
\]
Writing $f_{i_{1}\ldots i_{n}}=f_{i_{1}}\circ\ldots\circ f_{i_{n}}$for
$\underline{i}=i_{1}\ldots i_{n}\in\Lambda^{n}$ and iterating  $n$
times, we have 
\begin{equation}
X=\bigcup_{\underline{i}\in\Lambda^{n}}f_{\underline{i}}X\label{eq:iterated-recursion}
\end{equation}
This union consists of $|\Lambda|^{n}$ sets of diameter $r^{n}|X|$,
so by definition,
\[
N_{r^{n}\diam(X)}(X)\leq|\Lambda|^{n}
\]
Hence
\begin{equation}
\bdim(X)=\lim_{n\rightarrow\infty}\frac{\log N_{r^{n}\diam(X)}(X)}{\log(1/r^{n}\diam(X))}\leq\frac{\log|\Lambda|}{\log(1/r)}\label{eq:trivial-bound}
\end{equation}
The right hand side of (\ref{eq:improved-trivial-bound}) is called
the \emph{similarity dimension} of $X$ and is denoted $\sdim X$.%
\footnote{It would be better to write $\sdim\Phi$, since this quantity depends
on the presentation of $X$ and not on $X$ itself, but generally
there is only one IFS given and no confusion should arise.%
}

Is this upper bound an equality? Note that the bound is purely combinatorial
and does not take into account the parameters $a_{i}$ at all. Equality
is known to hold under some assumptions on the separation of the ``pieces''
$f_{i}X$, $i\in\Lambda$, for instance assuming \emph{strong separation}
(that the union (\ref{eq:SSSs}) is disjoint), or the open set condition
(that there exists open set $\emptyset\neq U\subseteq\mathbb{R}$
such that $f_{i}U\subseteq U$ and $f_{i}U\cap f_{j}U=\emptyset$
for $i\neq j$). 

Without separation conditions, however, the inequality (\ref{eq:trivial-bound})
can be strict. There are two trivial ways this can occur. First, there
could be too many maps: if $|\Lambda|>1/r$ then the right hand side
of (\ref{eq:trivial-bound}) is greater then $1$, whereas $\bdim X\leq1$
due to the trivial bound $N(X,\varepsilon)\leq\left\lceil \diam(X)/\varepsilon\right\rceil $.
Thus we should adjust (\ref{eq:trivial-bound}) to read
\begin{equation}
\bdim(X)\leq\min\{1,\frac{\log|\Lambda|}{\log(1/r)}\}\label{eq:improved-trivial-bound}
\end{equation}

Second, the combinatorial bound may be over-counting if some of the
sets in the union (\ref{eq:iterated-recursion}) coincide, that is,
for some $n$ we have $f_{\underline{i}}=f_{\underline{j}}$ for some
distinct $\underline{i},\underline{j}\in\Lambda^{n}$. This situation
is known as \emph{exact overlaps}. If such $\underline{i},\underline{j}$
exist then we can re-write (\ref{eq:iterated-recursion}) as $X=\bigcup_{\underline{u}\in\Lambda^{n}\setminus\{\underline{i}\}}f_{\underline{u}}X$,
which presents $X$ as the attractor of the IFS $\Phi'=\{f_{\underline{u}}\}_{\underline{u}\in\Lambda'}$
for $\Lambda'=\Lambda^{n}\setminus\{\underline{i}\}$. This IFS consists
of $|\Lambda'|=|\Lambda|^{n}-1$ maps that contract by $r^{n}$, so,
applying the trivial bound (\ref{eq:trivial-bound}) to this IFS,
we have $\bdim X\leq\log(|\Lambda|^{n}-1)/\log(1/r^{n})$, which is
better than the previous bound of $\log|\Lambda|/\log(1/r)$. To take
an extreme example, if all the maps $f_{i}$ coincide then the attractor
$X$ is just the unique fixed point of the map, and its dimension
is $0$. 

Are there other situations where a strict inequality occurs in (\ref{eq:improved-trivial-bound})?
A-priori, one does not need \emph{exact} coincidences between sets
in (\ref{eq:iterated-recursion}) to make the combinatorial bound
very inefficient. It could happen, for example, that many of the sets
$f_{\underline{i}}X$, $\underline{i}\in\Lambda^{n}$, align almost
exactly, in which case one may need significantly fewer than $|\Lambda|^{n}$
$\varepsilon$-intervals to cover them. Nevertheless, although such
a situation can easily be arranged for a fixed $n$, to get a drop
in dimension one would need this to happen at all sufficiently small
scales. No such examples are known, and the main subject of this paper
is the conjecture that this cannot happen:
\begin{conjecture}
\label{conj:main}A strict inequality in (\ref{eq:improved-trivial-bound})
can occur only in the presence of exact overlaps.
\end{conjecture}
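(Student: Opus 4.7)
The plan is to prove the contrapositive: if the IFS $\Phi$ has no exact overlaps, then equality holds in (\ref{eq:improved-trivial-bound}). Since all contractions equal $r$, ``no exact overlaps'' means the translations $t_{\underline{i}} := f_{\underline{i}}(0) = \sum_{k=0}^{n-1} a_{i_{k+1}} r^{k}$ are pairwise distinct across each $\Lambda^{n}$. The strategy has two ingredients: a geometric dichotomy that extracts strong combinatorial information from any drop in dimension, and an algebraic rigidity step that upgrades that information to an exact coincidence.

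For the dichotomy, I would argue by contradiction from strict inequality in (\ref{eq:improved-trivial-bound}) via a covering/entropy argument at the nested scales $r^{n}$, along the lines suggested by \cite{Hochman2012a}. The trivial bound $N_{r^{n}\diam X}(X)\le |\Lambda|^{n}$ is obtained by using the $|\Lambda|^{n}$ cylinders as a cover; a uniform loss in this estimate that persists across all sufficiently small scales must be produced by non-trivial alignment between cylinders, and the quantitative form of this heuristic is expected to yield super-exponential concentration of translations: for every $c>0$ there exist arbitrarily large $n$ and distinct $\underline{i},\underline{j}\in\Lambda^{n}$ with $|t_{\underline{i}}-t_{\underline{j}}|<c^{n}$.

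The rigidity step turns this into an exact overlap when the parameters are algebraic. If $r$ and all $a_{i}$ lie in a fixed number field $K$, then $t_{\underline{i}}-t_{\underline{j}} = P(r)$ where $P\in K[x]$ has degree $<n$ and coefficients drawn from the finite set $\{a_{i}-a_{j}\}$, so $P$ has $K$-height bounded independently of $n$. A Liouville-type estimate therefore gives $|P(r)|\ge B^{-n}$ for some $B$ depending only on $\Phi$, as long as $P\neq 0$. Choosing $c=1/(2B)$ in the concentration step forces $P\equiv 0$ for some pair of indices, which unwinds into an equality $f_{\underline{i}}=f_{\underline{j}}$ at finite level, contradicting the no-exact-overlap hypothesis. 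This settles Conjecture \ref{conj:main} for IFSs with algebraic data.

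The main obstacle is the transcendental case. Without any Diophantine lower bound on $|P(r)|$, the polynomial $P_{n,\underline{i},\underline{j}}(x)=\sum(a_{i_{k+1}}-a_{j_{k+1}})x^{k}$ can be nonzero yet evaluate at $r$ to something super-exponentially small, so concentration alone is consistent with no exact overlap ever occurring. To close the conjecture one must show that the infinite family of near-overlap pairs, which persists across \emph{all} sufficiently small scales, cannot be produced by a transcendental coincidence and must collapse into a single combinatorial identity of bounded depth. A plausible route is to exploit the shift-invariance of the pairs $(\underline{i},\underline{j})$ under concatenation, combined with an inverse-theorem argument in the style of Freiman, to promote approximate cancellations into a genuine algebraic relation; but absent an arithmetic hypothesis on $r$ this promotion is, to my knowledge, outside the reach of current methods and is the principal obstruction to proving Conjecture \ref{conj:main} in its stated generality.
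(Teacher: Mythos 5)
The statement you are asked to prove is labelled a \emph{conjecture} in the paper, and the paper does not prove it: it only establishes the weak form (Theorem \ref{thm:main}, that strict inequality in (\ref{eq:improved-trivial-bound}) forces $\Delta_n\to 0$ super-exponentially) together with the Diophantine step that converts this into the full conjecture when the parameters are rational or algebraic (Proposition 1.3 and the remark following it). Your outline reproduces exactly this two-part structure: your ``geometric dichotomy'' is Theorem \ref{thm:main} (your super-exponential concentration of the translations $t_{\underline{i}}$ is precisely the statement $-\frac{1}{n}\log\Delta_n\to\infty$), and your Liouville-type lower bound $|P(r)|\ge B^{-n}$ for nonzero bounded-height polynomials in algebraic $r$ is the same height/denominator argument the paper uses ($\Delta_n\ge 1/(Qq^n)$ in the rational case). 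So for algebraic data your argument is correct and is the paper's argument.

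As a proof of the conjecture as stated, however, there is a genuine gap, and it is the one you yourself identify: when $r$ is transcendental there is no a priori lower bound on $|P_{n,\underline{i},\underline{j}}(r)|$ for nonzero $P$, so super-exponential concentration is consistent with the absence of exact overlaps, and no current method promotes the approximate cancellations to an exact identity $f_{\underline{i}}=f_{\underline{j}}$. This is precisely why the statement remains a conjecture in the paper. You should therefore not present this as a proof of Conjecture \ref{conj:main}; what you have is a correct derivation of the paper's Theorem \ref{thm:main} reduction and of the special cases it settles, plus an accurate diagnosis of the obstruction in general. One small caution on the first step: do not treat the ``dichotomy'' as routine --- it is the entire content of Theorem \ref{thm:main}, whose proof occupies the bulk of the paper (the localization of the sumset identity $X_{m+n}=X_m+r^mX_n$, the inverse theorem for entropy of convolutions, and the tree/entropy bookkeeping of Sections \ref{sec:Main-reduction-heuristic}--\ref{sec:From-Proof-to-Proof}), so citing it as ``expected to yield'' concentration undersells what must actually be established.
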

This conjecture appears in \cite[Question 2.6]{PeresSolomyak2000b},
though special cases of it have received attention for decades, in
particular Furstenberg's projection problem for the 1-dimensional
Sierpinski gasket (see e.g. \cite{Kenyon97}), the 0,1,3-problem (see
e.g. \cite{PollicottSimon1995}) and, for self-similar measures instead
of sets, the Bernoulli convolutions problem, (e.g. \cite{PeresSchlagSolomyak00}).

One may also draw an analogy between this conjecture and rigidity
statements in ergodic theory. Rigidity is the phenomenon that, for
certain group actions of algebraic origin, the orbit of the point
is as large as it can be (dense or possibly even equidistributed for
the volume measure) unless there is an algebraic obstruction to this
happening. To see the connection with the conjecture above, note that
$X$ is just the orbit closure of (any) $x\in\mathbb{R}$ under the
semigroup $\{f_{\underline{i}}\,:\,\underline{i}\in\bigcup_{n=1}^{\infty}\Lambda^{N}\}$
of affine maps, and that exact overlaps occur if and only if this
semigroup is not generated freely by $\{f_{i}\}_{i\in\Lambda}$. Thus
the conjecture predicts that the orbit closure of any point is as
large as it can be unless there are algebraic obstructions.

\subsection{\label{sub:Main-result}Progress towards the conjecture}

Our main subject here is a weakened form of the conjecture which proves
the full conjecture in some important examples and special classes
of IFSs. In order to state it we must first quantify the degree to
which the sets $f_{\underline{i}}(X)$ are separated from each other.
Since all of the maps in $\Phi$ contract by the same ratio, any two
of the sets $f_{\underline{i}}(X)$ and $f_{\underline{j}}(X)$ for
$\underline{i}.\underline{j}\in\Lambda^{n}$ are translates of each
other. We define the distance between them as the magnitude of this
translation, which is given by $f_{\underline{i}}(x)-f_{\underline{j}}(x)$
for any $x\in\mathbb{R}$; we shall choose $x=0$ for concreteness.
Thus a measure of the degree of concentration of cylinders $f_{\underline{i}}(X)$,
$\underline{i}\in\Lambda^{n}$, is provided by 
\begin{eqnarray*}
\Delta_{n} & = & \min\{|f_{\underline{i}}(0)-f_{\underline{j}}(0)|\,:\,\underline{i},\underline{j}\in\Lambda^{n}\,,\,\underline{i}\neq\underline{j}\}
\end{eqnarray*}
Evidently, exact overlaps occur if and only if there exists an $n$
such that $\Delta_{n}=0$. Fixing $x\in X$, the points $f_{\underline{i}}(x)$
, $\underline{i}\in\Lambda^{n}$, all lie in $X$, and so there must
be a distinct pair $\underline{i},\underline{j}\in\Lambda^{n}$ with
$|f_{\underline{i}}(x)-f_{\underline{j}}(x)|\leq\diam(X)/|\Lambda|^{n}$;
hence $\Delta_{n}\rightarrow0$ at least exponentially. In general
there may be an exponential lower bound on $\Delta_{n}$ as well,
i.e. $\Delta_{n}\geq cr^{n}$ for some $c,r>0$. This is always the
case when the IFS satisfies strong separation or the open set condition,
but there are examples where it holds even when these conditions fail
(see Garsia \cite{Garsia1962}). Therefore the following theorem from
\cite{Hochman2012a} gives nontrivial information and should be understood
as a weak form of Conjecture \ref{conj:main}.
\begin{thm}
\label{thm:main}If $X\subseteq\mathbb{R}$ is a self-similar set
and $\dim X<\min\{1,\sdim X\}$, then $\Delta_{n}\rightarrow0$ super-exponentially,
that is, $-\frac{1}{n}\log\Delta_{n}\rightarrow\infty$. 
\end{thm}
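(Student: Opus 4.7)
The plan is to prove the contrapositive: assume exponential separation $\Delta_{n}\geq cr^{n}$ for some $c>0$, and deduce $\dim X\geq\min\{1,\sdim X\}$; together with (\ref{eq:improved-trivial-bound}) this forces equality, which is the negation of the theorem's hypothesis. Write $s=\dim X$ and set $A_{n}=\{f_{\underline{i}}(0):\underline{i}\in\Lambda^{n}\}$. The iterated recursion $X=\bigcup_{\underline{i}\in\Lambda^{n}}f_{\underline{i}}(X)$ together with $f_{\underline{i}}(X)=r^{n}X+f_{\underline{i}}(0)$ presents $X$ as a subset of the sumset $A_{n}+r^{n}X$, and yields
\[
N_{r^{n+m}}(X)\;\leq\;|\Lambda|^{n}\cdot N_{r^{m}}(X).
\]
Letting $m\to\infty$ recovers only the trivial bound $s\leq\sdim X$. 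What is needed is to promote exponential separation into near-equality here, thereby forcing $s\geq\sdim X$ when $\sdim X\leq 1$ (and similarly $s\geq 1$ otherwise).

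The key input is a multi-scale inverse theorem for sumset covering numbers, the covering-number analogue of the entropy inverse theorem from \cite{Hochman2012a}: if $N_{\varepsilon}(A+B)$ is substantially smaller than $|A|\cdot N_{\varepsilon}(B)$ for a $\delta$-separated finite set $A$, then at a positive density of dyadic scales $\eta\in[\varepsilon,\diam(B)]$ either $A$ fails to be $\eta$-separated, or $B$ is locally interval-like at scale $\eta$ (its covering number in small intervals $I$ approaches the maximum $|I|/\eta$). This is the structural heart of the argument, extending classical Pl\"unnecke--Ruzsa--Freiman-type rigidity across a whole range of scales at once.

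Applied with $A=A_{n}$ and $B=r^{n}X$, the exponential separation hypothesis, propagated to all relevant scales, rules out the first alternative. If moreover $s<\min\{1,\sdim X\}$, the ratio $N_{r^{n+m}}(X)/(|\Lambda|^{n}N_{r^{m}}(X))$ is smaller than $1$ by an exponential factor in $n$, triggering the inverse theorem and forcing $X$ to be locally interval-like at a positive-density set of dyadic scales $r^{k}$ with $n\leq k\leq n+m$. But a set of box dimension $s<1$ satisfies $N_{r^{k}}(X)=r^{-sk(1+o(1))}$; since each interval-like scale contributes close to the maximal exponent $1$ to the step $N_{r^{k-1}}(X)\to N_{r^{k}}(X)$, a positive density of such scales pulls the average exponent strictly above $s$, a contradiction.

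The main obstacle is the inverse theorem itself. Classical additive combinatorics supplies only single-scale structure, whereas this argument requires a quantitative multi-scale version whose conclusion --- a definite density of ``good'' scales depending on the deficit $\sdim X - s$ --- is strong enough to survive averaging against the covering-number growth of $X$. Formulating this correctly (in \cite{Hochman2012a} via the conditional entropy of the uniform self-similar measure relative to dyadic partitions) and executing the multi-scale bookkeeping to extract a clean contradiction with $\dim X<1$ is where the genuine difficulty lies; the elementary covering-number route captures the geometric content at the cost of some heuristic averaging in the closing step.
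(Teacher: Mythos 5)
Your overall architecture matches the paper's --- argue by contradiction, convert the dimension deficit into a statement about sumset growth, invoke an inverse theorem, and close with a multi-scale count against self-similarity --- but the specific reduction you propose does not work, for two reasons. First, the inverse theorem you invoke is not the one that is available. The theorem actually used (Theorem \ref{thm:A+B-small}) requires the hypothesis $N_{\rho}(X+Y)\leq N_{\rho}(X)^{1+\delta}$, i.e.\ the sumset barely exceeds \emph{one} summand while the other still satisfies $N_{\rho}(Y)\geq\rho^{-\tau}$. What you feed it is a deficit from the \emph{product} bound, $N_{\varepsilon}(A_{n}+r^{n}X)\ll|A_{n}|\cdot N_{\varepsilon}(r^{n}X)$. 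That is a far weaker hypothesis --- it already holds for trivial reasons whenever $\sdim X>1$ (saturation of Lebesgue measure), and no structure theorem of the form you state (positive density of interval-like scales for $B$ or concentration scales for $A$) is established in this regime. To reach the regime of the real inverse theorem one must \emph{localize}: find a single interval $I_{m}$ of length $r^{m}$ in which $X_{m}$ has at least $r^{-\sigma m}$ points (Proposition \ref{prop:fiber-covering-numbers}, which uses both hypotheses) and for which the local sumset $(X_{m}\cap I_{m})+r^{m}X_{n}$ has covering number close to $N_{r^{m+n}}(r^{m}X_{n})$ alone (Proposition \ref{prop:local-sumset}); showing these two families of good intervals intersect is precisely the step that fails for covering numbers and forces the passage to entropy. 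Your outline skips this localization entirely, yet it is the crux.

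Second, you misuse the separation hypothesis. The bound $\Delta_{n}\geq cr^{n}$ separates the \emph{points} of $A_{n}$ at scale $r^{n}$; it does not ``propagate to all relevant scales'' and in no way prevents $A_{n}$ from being concentrated, in the tree/mass sense, at coarser scales $\eta\gg r^{n}$. On the contrary, the assumption $\dim X<\min\{1,\sdim X\}$ \emph{forces} such concentration (that is the content of Proposition \ref{prop:fiber-covering-numbers}), so the branch of your dichotomy that you claim to rule out is in fact unavoidable. The correct argument runs the other way: it exploits the concentration to manufacture the auxiliary set $Y_{n}=r^{-m}(X_{m}\cap I_{m})$, and uses the exponential lower bound on $\Delta_{n}$ only to guarantee that this concentrated piece still resolves into $\geq2^{\tau n}$ points at scale $r^{n}$, which is what bounds the density of concentration scales via Lemma \ref{lem:concentration-implies-small-tree}. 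Finally, your closing averaging step also needs a matching lower bound of roughly $\alpha$ on the branching at the non-interval-like scales (Proposition \ref{prop:SSSs-have-uniform-branching}, another use of self-similarity), and the naive conversion of per-scale branching into a global leaf count is false for covering numbers (``Proposition'' \ref{prop:Large-branching-implies-large-tree}) --- the second reason the rigorous proof must be carried out with entropy.
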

In practice, one applies the theorem after establishing an exponential
lower bound on $\Delta_{n}$ to deduce that $\dim X=\min\{1,\sdim X\}$.
For example,
\begin{prop}
Let $\mathcal{R}$ denote the set of rational IFSs, i.e. such that
$r,a_{i}\in\mathbb{Q}$. Then Conjecture \ref{conj:main} holds in
$\mathcal{R}$.\end{prop}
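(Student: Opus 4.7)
The plan is to argue by contraposition: assuming the IFS $\Phi \in \mathcal{R}$ has no exact overlaps, I will give an exponential lower bound on $\Delta_n$, and then invoke Theorem \ref{thm:main} to rule out the case $\dim X < \min\{1, \sdim X\}$.

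First I would compute $f_{\underline{i}}(0) - f_{\underline{j}}(0)$ explicitly. For $\underline{i} = i_1 \ldots i_n$, iteration gives
\[
f_{\underline{i}}(0) = \sum_{k=0}^{n-1} r^{k} a_{i_{k+1}}.
\]
Writing $r = p/q$ in lowest terms and choosing a common denominator $c$ for all the $a_i = b_i/c$, a direct calculation yields
\[
f_{\underline{i}}(0) - f_{\underline{j}}(0) = \frac{1}{c\, q^{n-1}} \sum_{k=0}^{n-1} p^{k} q^{n-1-k} \bigl(b_{i_{k+1}} - b_{j_{k+1}}\bigr),
\]
which is a rational number whose denominator divides $c q^{n-1}$.

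Next, under the assumption that there are no exact overlaps, the numerator above is a nonzero integer whenever $\underline{i} \neq \underline{j}$, since $f_{\underline{i}} \neq f_{\underline{j}}$ forces $f_{\underline{i}}(0) \neq f_{\underline{j}}(0)$ (the maps share the same contraction ratio and differ only in translation). Hence every nonzero value of $|f_{\underline{i}}(0) - f_{\underline{j}}(0)|$ is at least $1/(c q^{n-1})$, and therefore
\[
\Delta_n \;\geq\; \frac{1}{c\, q^{n-1}},
\]
an exponential lower bound. Consequently $-\frac{1}{n} \log \Delta_n$ is bounded above, which contradicts the super-exponential decay guaranteed by Theorem \ref{thm:main} under the hypothesis $\dim X < \min\{1,\sdim X\}$. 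Thus in the rational case strict inequality in (\ref{eq:improved-trivial-bound}) forces exact overlaps.

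There is essentially no obstacle here: once Theorem \ref{thm:main} is in hand, the only real content is the elementary observation that a nonzero rational with denominator $c q^{n-1}$ has absolute value at least $1/(c q^{n-1})$. The argument is a textbook instance of using a Liouville-type Diophantine bound to promote the qualitative conclusion ``super-exponential decay of $\Delta_n$'' into the much stronger conclusion ``$\Delta_n = 0$ for some $n$.''
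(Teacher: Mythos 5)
Your argument is correct and is essentially identical to the paper's: both clear denominators in $f_{\underline{i}}(0)-f_{\underline{j}}(0)$ to show that, absent exact overlaps, $\Delta_n$ is a nonzero rational with denominator bounded by an exponential in $n$, hence $\Delta_n$ has an exponential lower bound, and then both invoke Theorem \ref{thm:main}. The only differences are cosmetic (you take a single common denominator $c$ and reduce $r=p/q$ to lowest terms, where the paper uses $Q=\prod q_i$ and the looser bound $Qq^n$), which do not affect the substance.
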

\begin{proof}
First,~a useful identity: For $\underline{i}\in\Lambda^{n}$, a direct
calculation shows that 
\begin{eqnarray}
f_{\underline{i}}(x) & = & r^{n}x+\sum_{k=1}^{n}a_{i_{k}}r^{k-1}\label{eq:iterated-f-a}\\
 & = & r^{n}x+f_{\underline{i}}(0)\label{eq:iterated-f-b}
\end{eqnarray}
Now let that $f_{i}(x)=rx+a_{i}$ where $r=p/q$ and $a_{i}=p_{i}/q_{i}$
for $p,p_{i},q,q_{i}$ integers and write $Q=\prod_{i\in\Lambda}q_{i}$.
Then $f_{\underline{i}}(0)=\sum_{k=1}^{n}a_{i_{k}}r^{n-k}$ is a rational
number with denominator $Qq^{n}$. Suppose that no overlaps occur,
so that $\Delta_{n}>0$ for all $n$. Given $n$, by definition there
exist distinct $\underline{i},\underline{j}\in\Lambda^{n}$ such that
$\Delta_{n}=f_{\underline{i}}(0)-f_{\underline{j}}(0)$. Therefore
$\Delta_{n}$ is a non-zero rational number with denominator $Qq^{n}$
so we must have $\Delta_{n}\geq1/Qq^{n}$. By Theorem \ref{thm:main}
we conclude that $\dim X=\min\{1,\sdim X\}$.
\end{proof}
The same argument works in the class of IFSs with algebraic coefficients,
using a similar lower bound on polynomial expressions in a given set
of algebraic numbers. See \cite[Theorem 1.5]{Hochman2012a}. A simple
(but non-trivial) calculation, due to B. Solomyak and P. Shmerkin,
also allows one to deal with the case that one of the translation
parameters $a_{i}$ is irrational, resolving Furstenberg's question
about linear projections of the one-dimensional Sierpinski gasket
\cite[Theorem 1.6]{Hochman2012a}. Theorem \ref{thm:main} leads to
strong results about parametric families of self-similar sets \cite[Theorem 1.8]{Hochman2012a},
and there is a version for measures which has also led to substantial
progress on the Bernoulli convolutions problem, see \cite[Theorem 1.9]{Hochman2012a}
and the recent advance by Shmerkin \cite{Shmerkin2013}. Another interesting
application is given in \cite{Orponen2013}. 

The rest of this paper is an exposition of the proof of the theorem.
Our goal is to present the ideas as transparently as possible, and
to this end we frame the argument in terms of covering numbers (rather
than entropy as in \cite{Hochman2012a}). This leads to simpler statements
and to an argument that is conceptually correct but, unfortunately,
incomplete; some crucial steps of this simplified argument are flawed.
In spite of this deficiency we believe that such an exposition will
be useful as a guide to the more technical proof in \cite{Hochman2012a}.
To avoid any possible misunderstandings, we have indicated the false
statements in quotation marks (``Lemma'', ``Proof'', etc.).

As we shall see, the main idea is to reduce (the negation of) the
theorem to a statement about sums of self-similar sets with other
sets. Problems about sums of sets fall under the general title of
additive combinatorics, and in the next section we give a brief introduction
to the parts of this theory that are relevant to us. In Section \ref{sec:Main-reduction-heuristic}
we explain the reduction to a statement about sumsets, and show how
an appropriate inverse theorem essentially settles the matter. Finally,
in Section \ref{sec:From-Proof-to-Proof}, we discuss how the heuristic
argument can be made rigorous. 
\begin{acknowledgement*}
Many thanks to Boris Solomyak for his coments on the paper.
\end{acknowledgement*}

\section{\label{sec:Additive-combinatorics}A birds-eye view of additive combinatorics}

\subsection{\label{sub:Freiman}Sumsets and inverse theorems}

The sum (or sumset) of non-empty sets $A,B\subseteq\mathbb{R}^{d}$
is 
\[
A+B=\{a+b\,:\, a\in A\,,\, b\in B\}
\]
Additive combinatorics, or at least an important chapter of it, is
devoted to the study of sumsets and the relation between the structure
of $A,B$ and $A+B$. We focus here on so-called inverse problems,
that is the problem of describing the structure of sets $A,B$ such
that $A+B$ is ``small'' relative to the sizes of the original sets.
The general flavor of results of this kind is that, if the sumset
is small, there must be an algebraic or geometric reason for it. It
will become evident in later sections that this question comes up
naturally in the study of self-similar sets.

To better interpret what ``small'' means, first consider the trivial
bounds. Assume that $A,B$ are finite and non-empty. Then $|A+B|\geq\max\{|A|,|B|\}$,
with equality if and only if at least one of the sets is a singleton.
In the other direction, $|A+B|\leq|A||B|$, and equality can occur
(consider $A=\{0,10,20,30,\ldots,10n\}$ and $B=\{0,1,\ldots,9\}$).
For ``generic'' pairs of sets the upper bound is close to the truth.
For example, when $A,B\subseteq\{1,\ldots,n\}$ are chosen randomly
by including each $1\leq i\leq n$ in $A$ with probability $p$ and
similarly for $B$, with all choices independent, there is high probability
that $|A+B|\geq c|A||B|$. The question becomes, what can be said
between these two extremes.

\subsection{\label{sub:Minimal-growth}Minimal growth}

One of the earliest inverse theorems is the Brunn-Minkowski inequality
of the late 19th century. The setting is $\mathbb{R}^{d}$ with the
volume measure, and it states that if $A,B\subseteq\mathbb{R}^{d}$
are convex sets then, given the volumes of $A,B$, the volume of $A+B$
is minimized when $A,B$ are balls with respect to some common norm.
Since the volume of a ball scales like the $d$-th power of the radius,
this means that $\vol(A+B)\geq(\vol(A)^{1/d}+\vol(B)^{^{1/d}})^{d}$,
and equality occurs if and only if, up to a nullset, $A,B$ are dilates
of the same convex set. The inequality was later extended to arbitrary
Borel sets (note that $A+B$ may not be a Borel set but it is an analytic
set and hence Lebesgue measurable). For a survey of this topic see
Gardner \cite{Gardner2002}.

Similar tight statements hold in the discrete setting. The analog
of a convex body is an \emph{arithmetic progression (AP)}, namely
a set of the form $P=\{a,a+d,a+2d,\ldots,a+(1-k)d\}$, where $d$
is called the gap (we assume $d\neq0$) and $k$ is called the length
of $P$. Then for finite sets $A,B\subseteq\mathbb{Z}^{d}$ with $|A|,|B|\geq2$
we always have $|A+B|\geq|A|+|B|-1$, with equality if and only if
$A,B$ are APs of the same gap \cite[Proposition 5.8]{TaoVu2006}.

\subsection{\label{sub:Linear-growth:-small-doubling}Linear growth: small doubling
and Freiman's theorem}

Now suppose that $A=B\subseteq\mathbb{Z}^{d}$ but weaken the hypothesis,
assuming only that 
\begin{equation}
|A+A|\leq C|A|\label{eq:freiman-growth}
\end{equation}
where we think of $A$ as large and $C$ as constant. Such sets are
said to have \emph{small doubling}. 

The simplest example of small doubling in $\mathbb{Z}^{d}$ is when
$A=\{1,\ldots,n\}^{d},$ in which case $|A+A|\leq2^{d}|A|$. This
example can be pushed down to any lower dimension as follows. For
$i=1,\ldots,k$, take intervals of integers $I_{i}=\{1,2,\ldots,n_{i}\}$,
and let $T:\mathbb{Z}^{k}\rightarrow\mathbb{Z}^{d}$ be an affine
map given by integer parameters. Suppose that $T$ is injective on
$I=I_{1}\times\ldots\times I_{k}$. Then $A=T(I)\subseteq\mathbb{Z}^{d}$
has the property that 
\[
|A+A|=|T(I)+T(I)|=|T(I+I)|\leq|I+I|\leq2^{k}|I|=2^{k}|A|
\]
A set $A$ as above is called a \emph{(proper) generalized arithmetic
progression} \emph{(GAP) of rank $k$}. 

GAPs are still extremely algebraic objects but one can get away from
this a little using another cheap trick: Begin with a set $A$ satisfying
$|A+A|\leq C|A|$ (e.g. a GAP) and choose any $A'\subseteq A$ with
cardinality $|A'|\geq D^{-1}|A|$ for some $D>1$. Then 
\[
|A'+A'|\leq|A+A|\leq C|A|\leq CD|A'|
\]

One of the central results of additive combinatorics is Freiman's
theorem, which says that, remarkably, these are the only ways to get
small doubling.
\begin{thm}
[Freiman] \label{thm:Freiman} If $A\subseteq\mathbb{Z}^{d}$ and
$|A+A|\leq C|A|$, then $A\subseteq P$ for a GAP $P$ of rank $C'$
and satisfying $|P|\leq C''|A|$. The constants satisfy  $C'=O(C(1+\log C))$
and $C''=C^{O(1)}$.
\end{thm}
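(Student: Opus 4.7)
The plan is to follow the now-standard Ruzsa--Chang route to Freiman's theorem, which assembles four ingredients: Pl\"unnecke--Ruzsa sumset estimates, a modeling/embedding step into a finite cyclic group, a Fourier-analytic Bogolyubov-type lemma refined by Chang's theorem on dissociated sets, and a geometry-of-numbers argument that locates a GAP inside a Bohr set. The small-doubling hypothesis $|A+A|\le C|A|$ is used in the form of the Pl\"unnecke--Ruzsa inequalities $|kA-\ell A|\le C^{k+\ell}|A|$, which one derives from Ruzsa's triangle inequality $|X-Y|\cdot|Z|\le|X-Z|\cdot|Z-Y|$ together with a standard graph-theoretic (or magnification) argument. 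I would in particular fix attention on the set $S=2A-2A$, whose cardinality is at most $C^{4}|A|$.

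Next I would perform a \emph{modeling} step: by a Freiman--Ruzsa isomorphism argument, one finds a subset $A'\subseteq A$ with $|A'|\gtrsim|A|/C^{O(1)}$ and a Freiman $8$-homomorphism that embeds $A'$ into a cyclic group $\mathbb{Z}/N\mathbb{Z}$ of order $N=O(C^{O(1)}|A|)$ in a way that preserves all sumset relations used in the argument. Working in this finite abelian group lets me apply Fourier analysis. Bogolyubov's lemma then guarantees that the image of $2A'-2A'$ contains a Bohr set $B(\Gamma,\tfrac{1}{4})=\{x\in\mathbb{Z}/N\mathbb{Z}\colon \|\gamma x/N\|<\tfrac{1}{4}\text{ for all }\gamma\in\Gamma\}$, where $\Gamma$ is the $\alpha$-spectrum of the normalized indicator of $A'$ for some $\alpha\gtrsim 1/C$. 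A crude bound on $|\Gamma|$ is $O(C^{2})$; to pull the rank down to the advertised $O(C(1+\log C))$ I would invoke Chang's theorem, which says that the spectrum is contained in the span of a \emph{dissociated} set of size $O(C\log C)$. This Chang step is the main obstacle, since it rests on a subtle Rudin-type inequality for dissociated characters.

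With a Bohr set $B(\Gamma,\tfrac14)$ of rank $d=|\Gamma|=O(C\log C)$ sitting inside $2A-2A$, the next step is geometry of numbers: apply Minkowski's second theorem to the lattice in $\mathbb{Z}^{d}\times\mathbb{Z}$ cut out by the frequencies $\Gamma$ to find a proper GAP $P$ of rank $d$ and cardinality at least $(c/d)^{d}N$ contained in $B(\Gamma,\tfrac14)$. Thus $P\subseteq 2A-2A$ is a proper GAP of rank $O(C\log C)$ whose cardinality is comparable to $|A|$ up to constants depending only on $C$.

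Finally I would close the argument with Ruzsa's covering lemma. Since $|A+P|\le|5A|\le C^{5}|A|\le C^{O(1)}|P|$, there is a set $F\subseteq A$ of size at most $C^{O(1)}$ such that $A\subseteq F+P-P$. The set $F$ can be absorbed by enlarging the GAP: replacing $P$ by $F+(P-P)$ gives a (possibly improper) GAP of rank $d+|F|=O(C(1+\log C))$ and cardinality $C^{O(1)}|A|$, which by a standard ``straightening'' argument may be replaced by a proper GAP with the same parameters up to constants. This yields the constants $C'=O(C(1+\log C))$ and $C''=C^{O(1)}$ as claimed. The delicate points, in order of difficulty, are Chang's spectral theorem (for the logarithmic rank bound), the Bogolyubov/Fourier step in the finite group, and the modeling lemma that allowed us to move to $\mathbb{Z}/N\mathbb{Z}$ in the first place.
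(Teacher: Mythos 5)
First, note that the paper does not prove this theorem: it is quoted as classical background, with a pointer to \cite[Theorems 5.32 and 5.33]{TaoVu2006}, and it is never actually used in the proof of Theorem \ref{thm:main} (which goes through the power-growth inverse theorems instead). So you are reconstructing a cited black box, and the route you describe --- Pl\"unnecke--Ruzsa, Ruzsa's modeling lemma into $\mathbb{Z}/N\mathbb{Z}$, Bogolyubov refined by Chang's spectral lemma, Minkowski's second theorem, Ruzsa covering --- is indeed the standard Ruzsa--Chang proof, i.e.\ essentially the argument in the very reference the paper cites. As a qualitative outline (rank and size bounded in terms of $C$ alone) it is sound.

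The quantitative endgame, however, has a genuine gap. After Bogolyubov--Chang and Minkowski, the proper GAP $P\subseteq 2A-2A$ of rank $d$ has cardinality only about $(c/d)^{d}N$, so Ruzsa's covering lemma gives $|F|\le|3A-2A|/|P|$, which is polynomial in $C$ at best (and worse once the $(c/d)^{d}$ loss is accounted for); your claimed rank $d+|F|=O(C(1+\log C))$ therefore does not follow from a \emph{single} application of the covering lemma. Controlling the rank is exactly where Chang's second innovation enters: an \emph{iterated} covering argument in which the progression is enlarged step by step and the total number of new generators is bounded by the number of times the progression can double inside $2A-2A$. Moreover, ``absorbing $F$ by enlarging the GAP'' multiplies the cardinality by up to $2^{|F|}$, so your construction yields $|P|\le\exp(C^{O(1)})|A|$ rather than $C^{O(1)}|A|$. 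In fact no proof can give $C''=C^{O(1)}$: for the dissociated set $A=\{N,N^{2},\dots,N^{k}\}$ with $N$ large one has $C\approx k/2$, while any GAP of cardinality $s$ containing $A$ must satisfy $k\lesssim(1+\log_{2}k)\log_{2}s$, forcing $s\geq 2^{\Omega(C/\log C)}$. So the cardinality bound in the statement should be read as $\log C''=C^{O(1)}$ (as in \cite{TaoVu2006}); this is worth flagging, since your writeup claims to derive the literal $C''=C^{O(1)}$, which is a sign that the bookkeeping in the final step was not actually carried out.
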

For more information see \cite[Theorem 5.32 and Theorem 5.33]{TaoVu2006}. 

Combined with some standard arguments (e.g. the Pl\"{u}nnecke-Rusza
inequality), the symmetric version leads to an asymmetric versions:
assuming $A,B\subseteq\mathbb{Z}^{d}$ and $C^{-1}\leq|A|/|B|\leq C$,
if $|A+B|\leq C|A|$ then $A,B$ are contained in a GAP $P$ of rank
and $\leq C'$ and size $|P|\leq C'|A|$, with similar bounds on the
constants.

\subsection{\label{sub:Sumsets-with-power-growth}Power growth, the ``fractal''
regime}

Now relax the growth condition even more and consider finite sets
$A\subseteq\mathbb{Z}$ (or $A\subseteq\mathbb{R}$) such that 
\begin{equation}
|A+A|\leq|A|^{1+\delta}\label{eq:power-bound}
\end{equation}
This is the discrete analog of the condition 
\begin{equation}
\bdim(X+X)\leq(1+\delta)\bdim X\label{eq:dim-growth}
\end{equation}
for $X\subseteq\mathbb{R}$. Indeed, given $X\subseteq\mathbb{R}$
and $n\in\mathbb{N}$ let $X_{n}$ denote the set obtained by replacing
each $x\in X$ with the closest point $k/2^{n}$, $k\in\mathbb{Z}$.
Then $|X_{n}|\sim2^{n(\bdim X+o(1))}$ and $|X_{n}+X_{n}|\sim2^{n(\bdim(X+X)+o(1))}$
for large $n$, so (\ref{eq:dim-growth}) is equivalent to $|X_{n}+X_{n}|\lesssim|X_{n}|^{1+o(1)}$.
Thus, the difference between (\ref{eq:freiman-growth}) and (\ref{eq:power-bound})
is roughly the difference between using Lebesgue measure or dimension
to quantify the size of a set $X\subseteq\mathbb{R}$.

Here is a typical example of a set satisfying (\ref{eq:power-bound}).
Write $P_{n}=\{0,\ldots,n-1\}$ and let 
\begin{eqnarray*}
A_{n} & = & \sum_{i=1}^{n}\frac{1}{2^{i^{2}}}P_{2^{i}}\\
 & = & \{\sum_{i=1}^{n}a_{i}2^{-i^{2}}\,:\,1\leq a_{i}\leq2^{i}\}
\end{eqnarray*}
(again, one can think of this either as a subset of $\mathbb{R}$,
or of $\frac{1}{4^{n^{2}}}\mathbb{Z}$). It is easy to verify that
the distance between distinct points $x,x'\in A_{n}$ is at least
$1/4^{n^{2}}$, and that such $x$ has a unique representation as
a sum $\sum_{i=1}^{n}a_{i}4^{-i^{2}}\,:\,1\leq a_{i}\leq2^{i}$. Indeed,
each term in the sum $\sum_{i=1}^{n}a_{i}2^{-i^{2}}$ determines a
distinct block of binary digits.  Thus $A_{n}$ is a GAP, being the
image of $P_{2}\times P_{4}\times\ldots\times P_{2^{n}}$ by the map
$(x_{1},\ldots,x_{n})\mapsto\sum\frac{1}{2^{i^{2}}}x_{i}$. The rank
is $n$, and so, as we saw in the previous section, 
\[
|A_{n}+A_{n}|\leq2^{n}|A_{n}|
\]
Since
\[
|A_{n}|=\prod_{i=1}^{n}|P_{n}|=2^{\sum_{i=1}^{n}i}=2^{n(n+1)/2}
\]
we have
\[
|A_{n}+A_{n}|=|A_{n}|^{1+o(1)}\qquad\mbox{as }n\rightarrow\infty
\]

The reader may recognize the example above as the discrete analog
of a Cantor set construction, where at stage $n$ we have a collection
of intervals $2^{n(n+1)/2}$ of length $2^{-n^{2}}$, and from each
of these intervals we keep $2^{n+1}$ sub-intervals of length $2^{-(n+1)^{2}}$,
separated by gaps of length $2^{-n^{2}-(n+1)}$. For the resulting
Cantor set $X$ it is a standard exercise to see that $\dim X=\bdim X=1/2$,
and the calculation above shows that $\dim X+X=1/2$ as well. Such
constructions appear in the work of Erd\H{o}s-Volkmann \cite{ErdosVolkmann1966},
and also in the papers of Schmeling-Shmerkin \cite{SchmelingShmerkin2009}
and K\"{o}rner \cite{Korner2008}, who showed that for any sequence
$\alpha_{1}\leq\alpha_{2}\leq\ldots$ there is a set $X$ with $\dim\sum_{i=1}^{n}X=\alpha_{n}$.

Do all examples of (\ref{eq:dim-growth}) look essentially like this
one? In principle one can apply Freiman's theorem, since the hypothesis
(\ref{eq:dim-growth}) can be written as $|A+A|\leq C|A|$ for $C=|A|^{\delta}$.
What one gets, however, is that $A$ is a $|A|^{O(\delta)}$-fraction
of a GAP or rank $|A|^{O(\delta)}$, and this gives rather coarse
information about $A$ (note that, trivially, every set is a GAP of
rank $|A|$).

Instead, it is possible to apply a multi-scale analysis, showing that
at some scales the set looks quite ``dense'' and at others quite
``sparse''. The best way to explain this is in the language of trees,
which we introduce next.

\subsection{\label{sub:Trees-and-tree-measures}Trees and tree-measures }

Denote the length of a finite sequence $\sigma=\sigma_{1}\ldots\sigma_{n}$
by $|\sigma|=n$ and write $\emptyset$ for the empty word, which
by definition has $|\emptyset|=0$. Denote the concatenation of words
$\sigma$ and $\tau$ by $\sigma\tau$, in which case we say that
$\sigma$ is a prefix of $\tau$, and that $\sigma\tau$ extends $\sigma$. 

The \emph{full binary tree }of height $h$ is the set $\{0,1\}^{\leq h}=\bigcup_{k=0}^{h}\{0,1\}^{k}$
of $0,1$-valued sequences of length $\leq h$, where our convention
is that $\{0,1\}^{0}=\{\emptyset\}$, so the empty word is included.
We define a \emph{tree of height }$h$ is a subset $T\subseteq\bigcup_{i=0}^{h}\{0,1\}^{i}$
satisfying 
\begin{enumerate}
\item [(T1)] $\emptyset\in T$.
\item [(T2)] If $\sigma\in T$ and $\eta$ is an initial segment of $\sigma$
then $\eta\in T$.
\item [(T3)] If $\sigma\in T$ then there is an $\eta\in T$ which extends
\ $\sigma$ and $|\eta|=h$.
\end{enumerate}
One may think of $T$ as a set of vertices and introduce edges between
every pair $\sigma_{1}\ldots\sigma_{i},\sigma_{1}\ldots\sigma_{i}\sigma_{i+1}\in T$.
Then $T$ is a tree if $\emptyset\in T$ and in the associated graph
there is a path from $\emptyset$ to every node, and all maximal paths
are of length $h$.

The \emph{level (or depth)} of $\sigma\in T$ is its length (the graph-distance
from $\emptyset$ to $\sigma$). The \emph{leaves }of a tree $T$
of height $h$ are the elements of the lowest (deepest) level, namely
$h$: 
\[
\partial T=T\cap\{0,1\}^{h}
\]
The \emph{descendants }of $\sigma\in T$ are the nodes $\eta\in T$
that extend $\sigma$. The nodes $m$ generations below $\sigma$
in $T$ are the nodes of the form $\eta=\sigma\sigma'\in T$ for $\sigma'\in\{0,1\}^{m}$.

We also shall need to work with measures ``on trees'', or, rather,
measures on their leaves. For notational purposes it is better to
introduce the notion of a \emph{tree-measure}%
\footnote{This notion is identical to a flow on the tree in the sense of network
theory.%
} on the full tree $\{0,1\}^{\leq h}$, namely, a function $\mu:\{0,1\}^{\leq h}\rightarrow[0,1]$
satisfying 
\begin{enumerate}
\item [(M1)] $\mu(\emptyset)=1$.
\item [(M2)] $\mu(\sigma)=\sum_{i\in\{0,1\}}\mu(\sigma i)$
\end{enumerate}
It is easily to derive from (M1) and (M2) that $\sum_{\sigma\in\{0,1\}^{k}}\mu(\sigma)=1$
for every $1\leq k\leq h$, so a tree-measure induces genuine probability
measures on every level of the full tree, and in particular on $\partial T$.
Conversely, if we have a genuine probability measure $\mu$ on the
set of leaves $\{0,1\}^{h}$ of the full tree of height $h$ then
it induces a tree-measure by $\mu(\sigma)=\sum_{\eta\,:\,\sigma\eta\in\partial T}\mu(\{\sigma\eta\})$.
Given a tree-measure, the set $T=\{\sigma\,:\,\mu(\sigma)>0\}$ is
a tree which might be called the support of $\mu$.

Every tree-measure $\mu$ on $\{0,1\}^{\leq h}$ defines a distribution
on the nodes of the tree as follows: first choose a level $0\leq i\leq h$
uniformly, and then choose a node $\sigma\in\{0,1\}^{i}$ in level
$i$ with the probability given by $\mu$, i.e. $\mu(\sigma)$ (we
have already noted that at each level the masses sum to $1$). Thus
the probability of $A\subseteq T$ is
\[
\mathbb{P}_{\mu}(A)=\frac{1}{h+1}\sum_{\sigma\in A}\mu(\sigma)
\]
and the expectation of $f:\{0,1\}^{\leq h}\rightarrow\mathbb{R}$
is
\[
\mathbb{E}_{\mu}(f)=\frac{1}{h+1}\sum_{k=0}^{n}\sum_{\sigma\in\{0,1\}^{k}}\mu(\sigma)f(\sigma)
\]
Sometimes we write $\mathbb{P}_{\sigma\sim\mu}$ or $\mathbb{E}_{\sigma\sim\mu}$
to define $\sigma$ as a random node, as in the expression
\[
\mathbb{P}_{\sigma\sim\mu}(\sigma\in T\mbox{ and }\sigma\mbox{ has two children in }T)=\frac{1}{h+1}\sum_{\sigma\in T}\mu(\sigma)1_{\{\sigma0,\sigma1\in T\}}
\]

Given the tree $T$ of height $h$, it is natural to consider the
uniform probability measure $\mu_{\partial T}$ on $\partial T$ and,
as described above, extend it to a tree-measure, which we denote $\mu_{T}$.
In this case we abbreviate the probability and expectation operators
above by $\mathbb{P}_{T}$ and $\mathbb{E}_{T}$, etc. It is important
to note that \emph{choosing a node according to $\mu_{T}$ is not
the same as choosing a node uniformly from $T$}. The latter procedure
is usually heavily biased towards sampling from the leaves, since
these generally constitute a large fraction of the nodes (in the full
binary tree, sampling this way gives a leaf with probability $>1/2$).
In contrast, $\mu_{T}$ samples uniformly from the levels, and within
each level we sample according to the relative number of leaves descended
from each node.

Trees and tree-measures are naturally related to sets and measures
on $[0,1)$ using binary coding. Given a set $X\subseteq[0,1)$ and
$h\in\mathbb{N}$, we lift $X$ to a tree $T$ of height $h$ by taking
all the initial sequences of length $\leq h$ of binary expansions
of points in $X$, with the convention that the expansion terminates
in $1$s if there is an ambiguity. We remark that for~$k\leq h$,
\[
N_{1/2^{k}}(X)\leq\left|T\cap\{0,1\}^{k}\right|\leq2N_{1/2^{k}}(X)
\]
Similarly, a probability measure $\mu$ on $[0,1)$ can be lifted
to a tree-measure $\widetilde{\mu}$ on $\{0,1\}^{\leq h}$ by defining
$\widetilde{\mu}(\sigma)$ equal to the mass of the interval of numbers
whose binary expansion begins with $\sigma$.

\subsection{\label{sub:Inverse-theorems-for-power-growth}Inverse theorems in
the power-growth regime}

We need some terminology for describing the local structure of trees.
We say that $T$ has \emph{full branching} \emph{for $m$ generations
at $\sigma$} if $\sigma$ has all $2^{m}$ possible descendants $m$
generations below it, that is, $\sigma\eta\in T$ for all $\eta\in\{0,1\}^{m}$.
At the other extreme, we say that $T$ is \emph{fully concentrated}
\emph{for $m$ generations at $\sigma$ }if $\sigma$ has a single
descendant $m$ generations down, that is, there is a unique $\eta\in\{0,1\}^{m}$
with $\sigma\eta\in T$.

Let us return to the example $A_{n}$ from Section \ref{sub:Sumsets-with-power-growth}
and examine the associated tree $T_{n}$ of height $n^{2}$. For every
$i<n$, every node at level $i^{2}$ has full branching for $i$ generations;
and every node at level $i^{2}+i$ is fully concentrated for $i+1$
generations. Consequently, for every $j\in[i^{2},i^{2}+i)$ every
node of level $j$ has full branching for one generation; for $j\in[i^{2}+i,(i+1)^{2})$,
every node at level $j$ is fully concentrated for one generation.
We also have the following statement: For every $m$ we can partition
the levels $0,1,\ldots,n^{2}$ into three sets $U,V,W$, such that
(a) For every $i\in U$, every level-$i$ node has full branching
for $m$ generations; (b) For every $j\in V$, every level-$j$ node
is fully concentrated for $m$ generations; and (c) $W$ is a negligible
fraction of the levels, specifically $|W|/n^{2}=o(1)$ as $n\rightarrow\infty$
(with $m$ fixed). Of course, $U=\bigcup_{i>m}[i^{2},i^{2}+i-m)$,
$V=\bigcup_{i>m}[i^{2}+i,(i+1)^{2}-m)$, and $W$ is the set of remaining
levels. This is pictured schematically in Figure \ref{fig:A+A-small}.

\begin{figure}
\centering{}\includegraphics[scale=0.7]{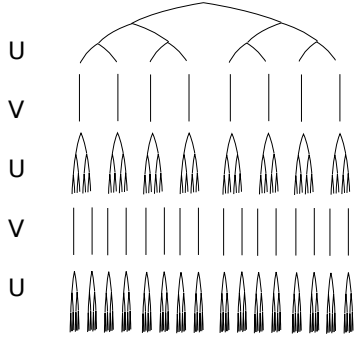}\caption{\label{fig:A+A-small}A tree with alternating levels having full branching
at some levels, full concentration at others, and a few levels omitted.
Schematically this is what the tree associated to $A_{n}$ in Section
\ref{sub:Sumsets-with-power-growth} looks like, as well as the conclusion
of Theorem \ref{thm:inverse-theorem-A+A} (with $W$ indicated by
the small space between levels). }
\end{figure}

Does this picture hold in general when $|A+A|\leq|A|^{1+\delta}$?
Certainly not exactly, since we can always pass to a subset $A'\subseteq A$
of size $|A'|\geq|A|^{1-\delta}$ and get a set with similar doubling
behavior (for a constant loss in $\delta$), but much less structure.
One can also perturb it in other ways. However, in a looser sense,
the picture above is quite general. One approach is to pass to a subtree
of reasonably large relative size. Such an approach was taken by Bourgain
in \cite{Bourgain2003,Bourgain2010}. The approach taken in \cite{Hochman2012a}
is more statistical, and in a sense it gives a description of the
entire tree, but requires us to weaken the notion of concentration.
Given $\delta>0$, we say that $T$ is $\delta$\emph{-concentrated}
for $m$ generations at $\sigma\in T$ if there exists $\eta\in\{0,1\}^{m}$
such that
\[
\mu_{T}(\sigma\eta)\geq(1-\delta)\mu_{T}(\sigma)
\]
where $\mu_{T}$ is the tree-measure associated to $T$. In other
words, $T$ is $\delta$-concentrated at $\sigma$ if it is possible
to remove an $\delta$-fraction of the leaves descended from $\sigma$
in such a way that the resulting tree becomes fully concentrated for
$m$ generations at $\sigma$. Note that this definition is not purely
local, since it depends not only on the depth-$m$ subtree of $T$
rooted at $\sigma$, but on the entire subtree rooted at $\sigma$,
since the weights on $S=\{\sigma\eta\,:\,\eta\in\{0,1\}^{m}\}$ are
determined by the number of leaves of $T$, not by $S$ itself. 
\begin{thm}
\label{thm:inverse-theorem-A+A}For every $\varepsilon>0$ and $m>1$,
there is a $\delta>0$ such that for all sufficiently small $\rho>0$
the following holds. Let $X\subseteq[0,1]$ be a finite set such that
\[
N_{\rho}(X+X)\leq N_{\rho}(X)^{1+\delta}
\]
and let $T$ be the associated tree of height $h=\left\lceil \log(1/\rho)\right\rceil $.
Then the levels $0,1,\ldots,h$ can be partitioned into sets $U,V,W$
such that
\begin{enumerate}
\item For every $i\in U$, 
\[
\mathbb{P}_{\sigma\sim T}(T\mbox{ has full branching at }\sigma\mbox{ for }m\mbox{ generations}\;|\;\sigma\mbox{ is in level }i)>1-\varepsilon.
\]

\item For every $j\in V$, 
\[
\mathbb{P}_{\sigma\sim T}(T\mbox{ is }\varepsilon\mbox{-concentrated at }\sigma\mbox{ for }m\mbox{ generations}\;|\;\sigma\mbox{ is in level }j)>1-\varepsilon.
\]

\item $|W|<\varepsilon h$.
\end{enumerate}
\end{thm}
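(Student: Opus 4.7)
I would prove the theorem by contradiction: assume a substantial fraction of levels are \emph{mixed} (neither mostly full-branching nor mostly $\varepsilon$-concentrated for $m$ generations) and extract from this a strict super-linear lower bound on $N_\rho(X+X)$ in terms of $N_\rho(X)$, contradicting the hypothesis for $\delta$ small. The natural workspace is the tree $T$ of height $h$ and the tree $T^+$ associated with $X+X$ (a tree of height $h$ whose covering numbers agree up to bounded factors with those of $X+X$). The $\mu_T$-weighted local branching over $m$ generations at $\sigma$, namely $\beta_m(\sigma) = -\sum_{\eta\in\{0,1\}^m} \frac{\mu_T(\sigma\eta)}{\mu_T(\sigma)}\log_2 \frac{\mu_T(\sigma\eta)}{\mu_T(\sigma)}$ (interpreted as an ``entropy'' proxy for branching), takes values in $[0,m]$, is $\approx m$ when $T$ has full branching at $\sigma$, and is $\approx 0$ when $T$ is $o(1)$-concentrated at $\sigma$. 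Averaging $\beta_m(\sigma)$ over levels gives, by telescoping, a quantity comparable to $\log_2 N_\rho(X)$; the same holds for $T^+$ and $N_\rho(X+X)$.

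\textbf{Step 1: local doubling forces local dichotomy.} Working one scale at a time, I would show: if at level $i$ the $\mu_T$-expected value of $\beta_m$ is $\alpha m$ with $\alpha \in (\varepsilon, 1-\varepsilon)$, then the corresponding local contribution of $T^+$ exceeds that of $T$ by at least $c(\varepsilon,m) > 0$. The intuition is the discrete Brunn--Minkowski / Freiman regime from Section~\ref{sub:Minimal-growth}: a set in $\{0,1\}^m$ whose logarithmic size is bounded away from both $0$ and $m$ has a sumset strictly larger than itself by a definite factor (Plünnecke--Ruzsa then transfers a doubling bound between comparable scales). Applied windowed at scale $i$, this becomes the statement that $T^+$ grows strictly faster than $T$ at mixed levels.

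\textbf{Step 2: aggregation.} Classify each level $i$ into $U$, $V$, or $W$ according to whether the $\mu_T$-mass of $\sigma$'s with $\beta_m(\sigma)$ close to $m$, close to $0$, or neither, exceeds $1-\varepsilon$. If $|W| > \varepsilon h$, then on a $\gtrsim \varepsilon$ fraction of levels a $\gtrsim \varepsilon$ mass of nodes is mixed, and by Step~1 each such pair contributes a surplus $\gtrsim c(\varepsilon,m)$ to the local growth of $T^+$ over $T$. Summing, telescoping back, and comparing to $\log_2 N_\rho(X+X) \le (1+\delta)\log_2 N_\rho(X)$ yields $\varepsilon^2 h \cdot c(\varepsilon, m) \lesssim \delta \log_2 N_\rho(X) \le \delta h$, which is a contradiction once $\delta < \delta_0(\varepsilon,m)$.

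\textbf{Main obstacle.} The substantive point is Step~1: converting the \emph{global} small-doubling hypothesis into a \emph{local}, one-scale statement with an honest quantitative penalty for mixedness. One needs a Plünnecke--Ruzsa-type inequality that travels cleanly between scales in covering-number language, and a Freiman-style lower bound showing that any $\alpha \in (\varepsilon, 1-\varepsilon)$ forces a definite surplus $c(\varepsilon, m)$ rather than an infinitesimal one. In the entropy formulation (as in \cite{Hochman2012a}), this is handled by the chain rule and conditional entropies, which behave additively under sums in a way that covering numbers do not; so I expect this is precisely the step where the covering-number presentation of the paper becomes heuristic rather than rigorous, and where one must ultimately pass to entropy to close the argument.
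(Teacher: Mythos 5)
You should first be aware that the paper does not actually prove Theorem \ref{thm:inverse-theorem-A+A}: it is quoted from \cite{Hochman2012a} (Theorems 2.7 and 2.9 there), with only the remark that entropy-uniformity and entropy-concentration of the lifted tree-measure translate into full branching and $\varepsilon$-concentration of the tree. Measured against the real proof, your architecture --- multiscale decomposition, a per-level dichotomy, aggregation against the doubling hypothesis --- is the right shape, and you correctly diagnose that covering numbers must eventually give way to entropy. But Step 1 is not a technical obstacle to be smoothed over; it is essentially the whole theorem, and the tools you invoke (Brunn--Minkowski, Freiman, Pl\"{u}nnecke--Ruzsa ``applied windowed at scale $i$'') do not reach it.

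Two concrete failures. (i) The telescoping in Step 2 is incoherent for covering numbers. Any component $A=X\cap I$ with $|A|\geq2$ satisfies $|A+A|\geq2|A|-1$, so \emph{every} level --- full-branching levels included --- shows a ``local surplus'' of about one bit; if these surpluses added up across levels you would conclude $N_{\rho}(X+X)\geq2^{h}N_{\rho}(X)$, which is absurd. The surplus at level $i$ must be measured \emph{conditionally} on level $i$ (the extra bit at each scale is the same bit, the doubling of the diameter), which is exactly the conditional-entropy chain rule; and for covering numbers there is the further problem that $(X+X)\cap J$ is a union over the many pairs $(I,I')$ with $I+I'\approx J$, and collisions give no lower bound on the covering number of a union, whereas the entropy of a mixture is bounded below by the average of the entropies. (ii) Once the bookkeeping is made conditional, the single-scale dichotomy you want is false: a component that is an arithmetic progression of length $2^{\alpha m}$ inside its cell has $\beta_{m}=\alpha m$ with $\alpha$ strictly intermediate, yet its self-sum, conditioned on the coarse cell, has essentially the same cardinality --- zero conditional surplus. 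So mixedness at one scale forces no growth at that scale; Freiman's theorem with $C=N^{\delta}$ yields only GAPs of rank $N^{O(\delta)}$ (vacuous, as Section \ref{sub:Sumsets-with-power-growth} already observes), and Pl\"{u}nnecke--Ruzsa controls iterated sumsets, not scales. The actual proof in \cite{Hochman2012a} must detect mixedness persisting across many scales, and does so by a genuinely multiscale argument (repeated self-convolution, entropy analogues of Pl\"{u}nnecke--Ruzsa, and a Berry--Esseen smoothing step), not by a one-scale Freiman-type bound.
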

If $X$ is $\rho$-separated, note that the hypothesis is essentially
the same as $|X+X|\leq|X|^{1+\varepsilon}$. 

Our analysis of self-similar sets requires the following asymmetric
variant, which is easily seen to imply the symmetric one above. To
motivate it, note that $|A+B|\leq C|A|$ can occur for two trivial
reasons: One is that $A=\{1,\ldots,n\}$ for some $n$ and $B\subseteq\{1,\ldots,n\}$
is arbitrary. The second is that $B=\{b\}$, a singleton, and $A$
is arbitrary. The following theorem says that when $|A+B|\leq|A|^{1+\delta}$
then there are essentially two kinds of scales: those where, locally,
the sets $A,B$ look like in the first trivial case, and those where,
locally, $A,B$ look like the second trivial case. See figure \ref{fig:A+B-small}.

\begin{figure}
\centering{}\includegraphics[scale=0.7]{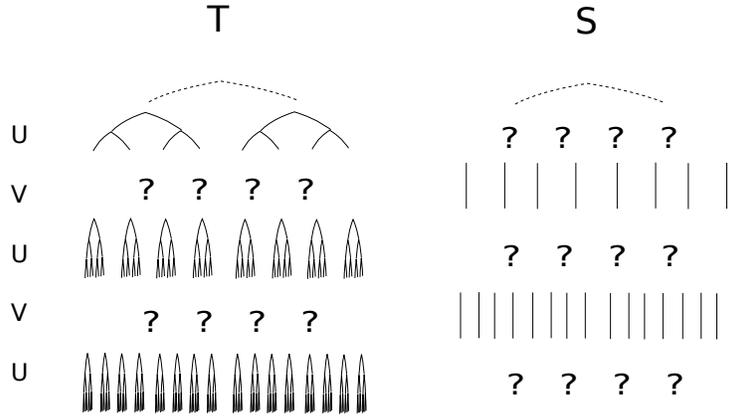}\caption{\label{fig:A+B-small}Schematic representation of the conclusion of
Theorem \ref{thm:A+B-small} (with $W$ indicated by the small space
between levels). }
\end{figure}

\begin{thm}
\label{thm:A+B-small}For every $\varepsilon>0$ and $m>1$, there
is a $\delta>0$ such that for all sufficiently small $\rho>0$ the
following holds. Let $X,Y\subseteq[0,1]$ be finite sets such that
\[
N_{\rho}(X+Y)\leq N_{\rho}(X)^{1+\delta}
\]
Let $T,S$ be the associated trees of height $h=\left\lceil \log(1/\rho)\right\rceil $,
respectively. Then the levels $0,1,\ldots,h$ can be partitioned into
sets $U,V,W$ such that
\begin{enumerate}
\item For every $i\in U$, 
\[
\mathbb{P}_{\sigma\sim T}(T\mbox{ has full branching at }\sigma\mbox{ for }m\mbox{ generations}\;|\;\sigma\mbox{ is in level }i)>1-\varepsilon
\]
(but we know nothing about $S$ at level $i$).
\item For every $j\in V$, 
\[
\mathbb{P}_{\sigma\sim S}(S\mbox{ is }\varepsilon\mbox{-concentrated at }\sigma\mbox{ for }m\mbox{ generations}\;|\;\sigma\mbox{ is in level }j)>1-\varepsilon
\]
(but we know nothing about $T$ at level $j$). 
\item $|W|<\varepsilon h$.
\end{enumerate}
\end{thm}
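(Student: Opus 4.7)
The plan is to convert the global inequality into a multi-scale branching statement, apply a Freiman-type local inverse theorem at each scale of length $m$, and sort the resulting scales into $U,V,W$ by pigeonhole. Throughout, I identify covering numbers with tree sizes via $N_{2^{-i}}(X)\asymp |T\cap\{0,1\}^i|$, and similarly for $Y$ and $X+Y$.

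First I partition the levels $\{0,\ldots,h\}$ into blocks of height $m$, writing $h=qm+r$ with $0\le r<m$. For each block index $k\in\{0,\ldots,q-1\}$ and each $\sigma\in T\cap\{0,1\}^{km}$, let $A_T(\sigma)\subseteq\{0,1\}^m$ denote the $m$-generation descendants of $\sigma$, identified with a subset of $\{0,\ldots,2^m-1\}$ via binary coding; define $A_S(\tau)$ analogously for $S$. A direct check shows that the $m$-generation expansion of the tree associated to $X+Y$, below a prefix corresponding to a pair $(\sigma,\tau)$, is controlled by $|A_T(\sigma)+A_S(\tau)|$ up to $O(1)$ error from binary carrying. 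Telescoping and averaging against the tree-measures $\mu_T$ and $\mu_S$ on level $km$ converts the hypothesis into
\[
\sum_{k=0}^{q-1}\mathbb{E}_{\sigma,\tau}\log\frac{|A_T(\sigma)+A_S(\tau)|}{|A_T(\sigma)|}\;\leq\;\delta\,\log N_\rho(X),
\]
so by Markov's inequality all but an $\varepsilon$-fraction of blocks $k$ have the left-hand ratio at most $\varepsilon$ on average over $\sigma,\tau$.

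Next I would invoke a local inverse sumset theorem, the heart of the argument: for $A,B\subseteq\{0,\ldots,2^m-1\}$ with $\log|A+B|-\log|A|$ small enough depending on $\varepsilon$ and $m$, either $|A|\geq(1-\varepsilon)2^m$ (so $T$ has full branching at $\sigma$, contributing to $U$) or $B$ is $\varepsilon$-concentrated on a single element of $\{0,1\}^m$ (so $S$ is $\varepsilon$-concentrated at $\tau$, contributing to $V$). This is a Freiman/Plünnecke–Ruzsa style dichotomy; one would choose $\delta$ small enough in terms of $\varepsilon$ and $m$ so that the conclusion of the dichotomy holds with the prescribed $\varepsilon$. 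Applying Markov a second time within each ``good'' block, the dichotomy propagates to a $(1-\varepsilon)$-fraction of nodes at level $km$. The surviving bad blocks, together with the unaligned levels in $\{0,\ldots,h\}\setminus\{0,m,\ldots,(q-1)m\}$, form $W$, and $|W|\leq\varepsilon h$ once $\rho$ is small (so that $h$ is large relative to $m$).

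The principal obstacle is the local dichotomy: at a single scale $m$ it is literally false in the sharp form stated, since for instance $A=2\cdot\{0,\ldots,2^{m-1}-1\}$ and $B=\{0,1\}$ give $|A+B|=2|A|$ while neither $A$ is almost-full nor $B$ almost a singleton. What one actually has to prove is a weaker, statistical dichotomy inside each block, obtained by iterating the multi-scale argument at a finer sub-scale and absorbing residual failures into $W$. This is precisely where the covering-number framework is too coarse, and why the rigorous argument in \cite{Hochman2012a} passes to entropy: the entropy version of the telescoping identity is exact rather than merely approximate, so the multiplicative slack that would otherwise accumulate across scales is eliminated, allowing the local dichotomy to be pushed through on average.
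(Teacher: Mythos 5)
There is a genuine gap, and in fact two. First, your telescoping step is not available for covering numbers. To get
$\sum_{k}\mathbb{E}_{\sigma,\tau}\log\bigl(|A_T(\sigma)+A_S(\tau)|/|A_T(\sigma)|\bigr)\leq\delta\log N_\rho(X)$
you need a lower bound on $\log N_\rho(X+Y)$ by the sum over blocks of the expected local sumset sizes. But distinct pairs $(\sigma,\tau)$ of level-$km$ cylinders of $X$ and $Y$ can land on the same or overlapping nodes of the tree of $X+Y$, so the local sumsets are counted with massive multiplicity; and even for a single tree, ``number of leaves is at least the product of average local branching'' is exactly the false ``Proposition'' \ref{prop:Large-branching-implies-large-tree} of Section \ref{sub:Getting-a-contradiction}. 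This is precisely the point where the paper abandons covering numbers: entropy gives an exact chain rule (Lemma \ref{prop:entorpy-averages}) and, for convolutions, a genuine localization inequality $H_\rho(\mu*\nu)\gtrsim\mathbb{E}(\text{local convolution entropies})$, neither of which has a covering-number analogue. Second, as you yourself observe, the single-scale Freiman dichotomy you invoke is false (take $A$ an arithmetic progression of gap $d>1$ in $\{0,\ldots,2^m-1\}$ and $B=\{0,d\}$: then $|A+B|=|A|+1$, yet $A$ is far from full and $B$ is far from a near-singleton). Your proposed repair --- ``absorb residual failures into $W$'' --- cannot work, because $W$ is a set of \emph{levels} of total size $<\varepsilon h$, whereas the failures of the dichotomy are generic rather than rare, occurring at every level whenever $A$ and $B$ are compatible progressions. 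The resolution is that the dichotomy only emerges \emph{across} scales (in the progression example, sub-levels of the block alternate between full branching and concentration), so the block-by-block reduction is essentially circular: the multi-scale dichotomy is the theorem itself, not a lemma one can feed into it.

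For comparison, the paper does not prove Theorem \ref{thm:A+B-small} by elementary means at all: it deduces it from the entropy inverse theorem \cite[Theorems 2.7 and 2.9]{Hochman2012a} (stated here as Theorem \ref{thm:inverse-thm-entropy}), using only the observations that near-maximal local entropy of the lifted tree-measure forces full branching and near-zero local entropy forces $\varepsilon$-concentration. The hard content --- the multi-scale decomposition into uniform and concentrated levels --- lives entirely in the proof of that entropy theorem, which combines an exact multi-scale entropy decomposition of convolutions with an entropy version of Freiman-type inverse theory; your outline correctly identifies the shape of the conclusion but not a viable path to it within the covering-number framework.
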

The theorems above follow from \cite[Theorems 2.7 and 2.9]{Hochman2012a},
using the fact that high enough entropy at a given scale implies full
branching, and small enough entropy at a given scale implies $\delta$-concentration.

\section{\label{sec:Main-reduction-heuristic}A conceptual proof of Theorem
\ref{thm:main} }

In this section we give a heuristic proof of Theorem \ref{thm:main}.
We begin with some general observations about self-similar sets. Then
we explain how the theorem is reduced to a statement about sumset
growth. Finally, we demonstrate how the inverse theorems of the previous
section are applied.

From now on let $\Phi=\{f_{i}\}_{i\in\Lambda}$ be an IFS with attractor
$X$, as in the introduction. We assume that $0\in X\subseteq[0,1)$;
this can always be achieved by a change of coordinates, which does
not affect the statement of Theorem \ref{thm:main}.

\subsection{\label{sub:Sumset-structure-of-SSSs}Sumset structure of self-similar
sets}

Our analysis will focus on finite approximations of $X$. Define the
$n$-th approximations by 
\[
X_{n}=\{f_{\underline{i}}(0)\,:\,\underline{i}\in\Lambda^{n}\}
\]
Clearly $X_{n}\subseteq X$. Also note that $|X_{n}|\leq|\Lambda|^{n}$,
with a strict inequality for some $n$ if and only if exact overlaps
occur. Self similarity enters our argument via the following lemma. 
\begin{lem}
\label{lem:sumset-structure-of-SSSs}For any $m,n\in\mathbb{N}$,
\begin{eqnarray}
X & = & X_{m}+r^{m}X\label{eq:cocycle-for-X}\\
X_{m+n} & = & X_{m}+r^{m}X_{n}\label{eq:cocycle-for-Xn}
\end{eqnarray}
\end{lem}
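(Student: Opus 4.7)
The plan is to derive both identities directly from the affine form of the composed maps, using the key identity already recorded in the paper, namely $f_{\underline{i}}(x) = r^{|\underline{i}|}x + f_{\underline{i}}(0)$ for any word $\underline{i}$. This identity follows from a direct induction on the length of $\underline{i}$ using the common contraction ratio $r$; it turns composition of the $f_i$ into an affine map whose translation part is precisely the orbit of $0$. Once this is in hand, both equations in the lemma become short computations with unions.

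For \eqref{eq:cocycle-for-X}, I would start from the $m$-fold iterate of the self-similarity relation, $X = \bigcup_{\underline{i}\in\Lambda^m} f_{\underline{i}}(X)$. Substituting the identity above with $n=m$ gives $f_{\underline{i}}(X) = r^m X + f_{\underline{i}}(0)$, so
\[
X \;=\; \bigcup_{\underline{i}\in\Lambda^m}\bigl(r^m X + f_{\underline{i}}(0)\bigr) \;=\; \{f_{\underline{i}}(0) : \underline{i}\in\Lambda^m\} + r^m X \;=\; X_m + r^m X.
\]

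For \eqref{eq:cocycle-for-Xn}, I would unpack the definition of $X_{m+n}$ by splitting a word $\underline{k}\in\Lambda^{m+n}$ as a concatenation $\underline{k} = \underline{i}\,\underline{j}$ with $\underline{i}\in\Lambda^m$ and $\underline{j}\in\Lambda^n$. Then $f_{\underline{k}} = f_{\underline{i}}\circ f_{\underline{j}}$, and evaluating at $0$,
\[
f_{\underline{i}\,\underline{j}}(0) \;=\; f_{\underline{i}}\bigl(f_{\underline{j}}(0)\bigr) \;=\; r^m f_{\underline{j}}(0) + f_{\underline{i}}(0),
\]
again by the affine identity. Letting $\underline{i}$ range over $\Lambda^m$ and $\underline{j}$ over $\Lambda^n$ gives exactly $X_m + r^m X_n$.

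There is essentially no obstacle here: the lemma is a bookkeeping consequence of the fact that the IFS has a single common contraction ratio, which makes composition reduce to affine addition. The only thing one needs to be mildly careful about is to observe that the decomposition $\underline{k} = \underline{i}\,\underline{j}$ for $\underline{k}\in\Lambda^{m+n}$ is a bijection onto $\Lambda^m\times\Lambda^n$, so no sets are missed or double-counted in passing from the union over $\Lambda^{m+n}$ to the sumset structure. One might also note that \eqref{eq:cocycle-for-X} can in fact be obtained as a limiting form of \eqref{eq:cocycle-for-Xn}, reflecting that $X$ is the closure of $\bigcup_n X_n$, but proving the two identities independently is cleaner.
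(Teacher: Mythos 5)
Your proof is correct and follows essentially the same route as the paper's: both derive \eqref{eq:cocycle-for-X} from the $m$-fold iterate of the self-similarity relation together with the affine identity $f_{\underline{i}}(x)=r^{|\underline{i}|}x+f_{\underline{i}}(0)$, and both derive \eqref{eq:cocycle-for-Xn} by splitting $\underline{k}=\underline{i}\,\underline{j}$ and computing $f_{\underline{i}\,\underline{j}}(0)=f_{\underline{i}}(0)+r^m f_{\underline{j}}(0)$. The only cosmetic difference is that you get this last equality by applying the affine form \eqref{eq:iterated-f-b} to $f_{\underline{i}}\circ f_{\underline{j}}$ evaluated at $0$, whereas the paper expands the explicit sum \eqref{eq:iterated-f-a}; the substance is identical.
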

\begin{proof}
By (\ref{eq:iterated-recursion}) and (\ref{eq:iterated-f-b}), 
\begin{eqnarray*}
X & = & \bigcup_{\underline{i}\in\Lambda^{n}}f_{\underline{i}}(X)\\
 & = & \bigcup_{\underline{i}\in\Lambda^{n}}\{f_{\underline{i}}(0)+r^{m}x\,:\, x\in X\}\\
 & = & X_{m}+r^{m}X
\end{eqnarray*}
which is the first identity. To prove the second, for $\underline{i}\in\Lambda^{m}$
and $\underline{j}\in\Lambda^{n}$ denote by $\underline{i}\underline{j}$
their concatenation. By (\ref{eq:iterated-f-a}), 
\begin{eqnarray*}
f_{\underline{i}\underline{j}}(0) & = & \sum_{k=1}^{m}a_{i_{k}}r^{k-1}+r^{m}\sum_{k=1}^{n}a_{j_{k}}r^{k-1}\\
 & = & f_{\underline{i}}(0)+r^{m}f_{\underline{j}}(0)
\end{eqnarray*}
hence 
\begin{eqnarray*}
X_{m+n} & = & \{f_{\underline{i}\underline{j}}(0)\,:\,\underline{ij}\in\Lambda^{m+n}\}\\
 & = & \{f_{\underline{i}}(0)+r^{m}f_{\underline{j}}(0)\,:\,\underline{i}\in\Lambda^{m},\underline{j}\in\Lambda^{n}\}\\
 & = & X_{m}+r^{m}X_{n}\qedhere
\end{eqnarray*}

\end{proof}
Let us demonstrate the usefulness of this lemma by showing that $\bdim(X)$
exists. First, since $r^{m}X$ is of diameter $\leq r^{m}$, it is
easy to deduce from (\ref{eq:cocycle-for-X}) that $N_{r^{n}}(X_{n})$,
$N_{r^{n}}(X)$ differ by at most a factor of $2$. Thus the existence
of $\bdim X$ is equivalent to existence of the limit $\frac{1}{m}\log N_{r^{m}}(X_{m})$
as $n\rightarrow\infty$. Next, we have a combinatorial lemma.
\begin{lem}
\label{lem:iterated-covering-number-of-sumset}Let $A,B\subseteq\mathbb{R}$
with $B\subseteq[0,\varepsilon)$. Then for any $\gamma<\varepsilon$,
\[
N_{\gamma}(A+B)\geq\frac{1}{3}\cdot N_{\varepsilon}(A)\cdot N_{\gamma}(B)
\]
\end{lem}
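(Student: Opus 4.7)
The plan is to exhibit a subcollection of translates $a + B \subseteq A+B$ indexed by a well-chosen $A' \subseteq A$, arranged so that distinct translates lie in pairwise disjoint intervals separated by a gap exceeding $\gamma$. A set of diameter $\leq \gamma$ cannot then meet two distinct translates, so any $\gamma$-cover of $A+B$ restricts to a $\gamma$-cover of each $a+B$ separately, giving
\[
N_\gamma(A+B) \;\geq\; \sum_{a \in A'} N_\gamma(a+B) \;=\; |A'| \cdot N_\gamma(B).
\]
The task reduces to choosing $A'$ with $|A'| \geq \tfrac{1}{3}N_\varepsilon(A)$ and the required separation.

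To construct $A'$, I would partition $\mathbb{R}$ into the half-open intervals $I_j = [j\varepsilon, (j+1)\varepsilon)$ for $j \in \mathbb{Z}$. Each $I_j$ has diameter $\leq \varepsilon$, so $\{I_j : I_j \cap A \neq \emptyset\}$ is an $\varepsilon$-cover of $A$; hence the set of indices $J = \{j : I_j \cap A \neq \emptyset\}$ satisfies $|J| \geq N_\varepsilon(A)$. A simple greedy selection produces $J' \subseteq J$ with $|J'| \geq |J|/3$ such that any two distinct $j, j' \in J'$ satisfy $|j - j'| \geq 3$ (each index added to $J'$ blocks at most two larger indices from entering). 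Picking one representative $a_j \in A \cap I_j$ for each $j \in J'$ defines $A' = \{a_j : j \in J'\}$.

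The separation claim is then immediate: since $B \subseteq [0,\varepsilon)$, we have $a_j + B \subseteq [a_j, a_j + \varepsilon)$, and for $j < j'$ in $J'$ the inequality $j' \geq j+3$ gives $a_{j'} \geq (j+3)\varepsilon$ while $a_j + \varepsilon < (j+2)\varepsilon$, so the gap between the two intervals is at least $\varepsilon > \gamma$. Combining this with the disjoint-union estimate above yields
\[
N_\gamma(A+B) \;\geq\; |J'| \cdot N_\gamma(B) \;\geq\; \tfrac{1}{3}\, N_\varepsilon(A)\, N_\gamma(B),
\]
as desired.

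There is no serious obstacle here; the only point requiring care is the choice of spacing $3$ in the greedy selection. Spacing of $2$ would leave the translated intervals only non-strictly disjoint (they could abut), which would not beat the diameter $\gamma$ of a covering set when $\gamma$ is close to $\varepsilon$; insisting on spacing $\geq 3$ creates a gap strictly greater than $\varepsilon$, and forces the $\tfrac{1}{3}$ constant in the statement.
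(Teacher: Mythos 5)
Your proof is correct. The greedy selection is sound (each chosen index excludes at most two others, so $|J'|\geq|J|/3$), the separation estimate $a_{j'}-\sup(a_j+B)\geq(j+3)\varepsilon-(j+2)\varepsilon=\varepsilon>\gamma$ is right, and it correctly forces the covering subfamilies of the distinct translates $a_j+B$ to be pairwise disjoint, giving $N_\gamma(A+B)\geq|J'|\,N_\gamma(B)$. Your remark about why spacing $2$ fails is also accurate: adjacent blocks would be disjoint but could be separated by less than $\gamma$, so a single covering set could straddle them.

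The paper reaches the same constant by a slightly different mechanism. It fixes an optimal disjoint cover of $A$ by $\varepsilon$-intervals $I_i$ with representatives $a_i\in A\cap I_i$, takes an optimal $\gamma$-cover $\mathcal{J}$ of $A+B$, and double-counts incidences between covering intervals $J\in\mathcal{J}$ and translates $a_i+B$: each translate meets at least $N_\gamma(B)$ intervals of $\mathcal{J}$, while each $J\in\mathcal{J}$ can meet translates $a+B$ only for $a$ in an interval of length $3\varepsilon$ (approximately), hence for at most $3$ of the disjoint $I_i$. Dividing gives the bound. So the paper keeps all $N_\varepsilon(A)$ translates and pays the factor $3$ \emph{a posteriori} through the incidence count, whereas you pay it \emph{a priori} by thinning to a $3$-separated subfamily and then counting disjointly. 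The two arguments are dual views of the same geometric fact (at most three $\varepsilon$-cells can be within distance $\varepsilon$ of a given point); yours has the small advantage that the final count is a genuinely disjoint union and needs no incidence bookkeeping, while the paper's version is the one that localizes more directly in the proof of Proposition \ref{prop:local-sumset}, where the same counting is reused.
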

\begin{proof}
Let $\mathcal{I}=\{I_{i}\}_{i=1}^{N_{\varepsilon}(A)}$ be an optimal
cover of $A$ by disjoint intervals of length $\varepsilon$. Let
$\mathcal{J}=\{J_{j}\}_{j=1}^{N_{\gamma}(A+B)}$ be an optimal cover
of $A+B$ by intervals of length $\gamma$. For each $I_{i}\in\mathcal{I}$
fix a point $a_{i}\in A\cap I_{i}$ and note that $a_{i}+B\subseteq A+B$
is covered by $\mathcal{J}$, so $a_{i}+B$ intersects at least $N_{\gamma}(B)$
intervals in $\mathcal{J}$. If each interval $J_{j}$ intersects
a unique translate $a_{i}+B$, we would conclude that $N_{\gamma}(A+B)\geq N_{\varepsilon}(A)N_{\gamma}(B)$.
While $a_{i}$ may not be unique, we can argue as follows: Since $B\subseteq[0,\varepsilon)$,
if $J_{j}=[u,u+\varepsilon]$ and intersects $a+B$ for some $a\in A$,
then $a\in[u-\varepsilon,u+2\varepsilon)$. Since the intervals $I_{i}$
are disjoint and of length $\varepsilon$, there are most $3$ intervals
$I_{i}\in\mathcal{I}$ that $a$ could belong to. The claim follows. 
\end{proof}
Since $X\subseteq[0,1)$ we have $r^{m}X_{n}\subseteq[0,r^{m})$,
so by the lemma,
\begin{eqnarray*}
N_{r^{m+n}}(X_{m+n}) & = & N_{r^{m+n}}(X_{m}+r^{m}X_{n})\\
 & \geq & \frac{1}{3}\cdot N_{r^{m}}(X_{m})\cdot N_{r^{m+n}}(r^{m}X_{n})\\
 & = & \frac{1}{3}\cdot N_{r^{m}}(X_{m})\cdot N_{r^{n}}(X_{n})
\end{eqnarray*}
where in the last equality we used the identity $N_{t\varepsilon}(tZ)=N_{\varepsilon}(Z)$.
Taking logarithms, this shows that the sequence $s_{n}=\log N_{r^{m}}(X_{m})$
is approximately super-additive in the sense that $s_{m+n}\geq s_{m}+s_{n}-C$
for a constant $C$. The existence of the limit of $\frac{1}{n}s_{n}$
as $n\rightarrow\infty$ is then well known (perhaps it is better
known when $C=0$ and $s_{n}$ is (really) super-additive. The proof
for $C=0$ works also in the $C>0$ case; alternatively, note that
$s'_{n}=s_{n}-\log n$ becomes super-additive after excluding finitely
many terms, so $\lim\frac{1}{n}s'_{n}$ exists, and $\frac{1}{n}s'_{n}-\frac{1}{n}s_{n}\rightarrow0$).

\subsection{\label{sub:Main-reduction-first-try}From Theorem \ref{thm:main}
to additive combinatorics }

Let us return to our main objective, Theorem \ref{thm:main}. Continuing
with the previous notation, write 
\begin{eqnarray*}
\alpha & = & \bdim X\\
\beta & = & \min\{1,\sdim X\}
\end{eqnarray*}
and suppose, by way of contradiction, that $\alpha<\beta$ and that
for some $k\in\mathbb{N}$ we have $\Delta_{n}\geq2^{-kn}$ for all
$n$ (in particular, there are no exact overlaps). We make a number
of observations. The first is rather trivial: that ``too small''
dimension means that there are intervals of length $r^{m}$ containing
exponentially many points from $X_{m}$. Precisely,
\begin{prop}
\label{prop:fiber-covering-numbers}Let $\sigma=\frac{1}{2}(\beta-\alpha)>0$.
Then for every large enough $m$, there is an interval $I_{m}$ of
length $r^{m}$ such that $|X_{m}\cap I_{m}|>r^{-\sigma m}$.\end{prop}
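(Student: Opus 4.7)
The plan is to prove this by a direct pigeonhole argument. The set $X_m$ has at most $|\Lambda|^m$ points, and since we have assumed no exact overlaps (in fact the stronger $\Delta_n \geq 2^{-kn} > 0$), the map $\underline{i}\mapsto f_{\underline{i}}(0)$ is injective on $\Lambda^m$, so $|X_m|=|\Lambda|^m = r^{-m\sdim X}$. On the other hand, because $0\in X$, we have $f_{\underline{i}}(0)\in f_{\underline{i}}(X)\subseteq X$, so $X_m\subseteq X$ and therefore $N_{r^m}(X_m)\leq N_{r^m}(X)$.

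Now by definition of $\alpha=\bdim X$, for any $\varepsilon>0$ we have $N_{r^m}(X)\leq r^{-(\alpha+\varepsilon)m}$ once $m$ is sufficiently large. Fixing an optimal cover of $X_m$ by intervals of length $r^m$, pigeonhole forces one of these intervals $I_m$ to contain at least
\[
\frac{|X_m|}{N_{r^m}(X_m)} \;\geq\; \frac{r^{-m\sdim X}}{r^{-(\alpha+\varepsilon)m}} \;=\; r^{-(\sdim X-\alpha-\varepsilon)m}
\]
points of $X_m$. Since $\beta=\min\{1,\sdim X\}\leq \sdim X$, this is at least $r^{-(\beta-\alpha-\varepsilon)m}$. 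Choosing $\varepsilon=\tfrac{1}{2}(\beta-\alpha)/2 = \sigma/2$, the exponent satisfies $\beta-\alpha-\varepsilon = \tfrac{3}{2}\sigma > \sigma$, and since $r<1$ we conclude $|X_m\cap I_m|>r^{-\sigma m}$ for all sufficiently large $m$, as required.

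There is no real obstacle here; the argument is little more than bookkeeping. The only subtlety worth noting is the role of the two defining clauses of $\beta$: when $\sdim X\leq 1$ the bound $|X_m|=r^{-m\sdim X}=r^{-\beta m}$ is tight, while in the case $\sdim X>1$ we have the stronger $|X_m|=r^{-m\sdim X}\gg r^{-\beta m}=r^{-m}$, which gives even more points to distribute and hence an even larger fiber. In both situations the inequality $\sdim X\geq \beta$ is all that is needed, so the two cases can be treated uniformly without splitting.
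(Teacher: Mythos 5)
Your argument is correct and is essentially the paper's own proof: both count $|X_m|=|\Lambda|^m=r^{-m\sdim X}\ge r^{-m\beta}$ using the absence of exact overlaps, bound $N_{r^m}(X_m)$ by roughly $r^{-(\alpha+o(1))m}$, and pigeonhole. The only cosmetic difference is that you pass through $N_{r^m}(X_m)\le N_{r^m}(X)$ and the definition of $\bdim X$, whereas the paper invokes the already-established convergence $\frac{1}{m\log(1/r)}\log N_{r^m}(X_m)\to\alpha$; these are interchangeable here.
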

\begin{proof}
As we have already noted, $\frac{1}{m\log(1/r)}\log N_{r^{m}}(X_{m})\rightarrow\alpha$
as $m\rightarrow\infty$. Thus for large enough $m$, 
\[
N_{r^{m}}(X_{m})<r^{-(\beta-\sigma)m}
\]
On the other hand, since there are no exact overlaps, 
\[
|X_{m}|=|\Lambda|^{m}=r^{-m\sdim(X)}\geq r^{-m\beta}
\]
Thus in an optimal cover of $X_{m}$ by $r^{m}$-intervals, at least
one must contain $|X_{m}|/N_{r^{m}}(X_{m})\geq(1/r)^{\sigma m}$ points.
\end{proof}
We now wish to extract more information from the sumset identity $X_{m+n}=X_{m}+r^{m}X_{n}$.
In itself it provides limited information about the covering number
$N_{m+n}(X_{n})$, since the summands live at different scales. This
is what was used earlier in proving super-additivity of $s_{n}=\log N_{r^{m}}(X_{m})$.
The next step is to localize the sumset relation. 
\begin{prop}
\label{prop:local-sumset}For all $\delta>0$, for all large $m$
there exists an interval $J_{m}$ of length $r^{m}$ such that $X_{m}\cap J_{m}\neq\emptyset$
and, writing $n=km$, 
\begin{equation}
N_{r^{m+n}}((X_{m}\cap J_{m})+r^{m}X_{n})<r^{-(1+\delta)\alpha n}\label{eq:local-sumset}
\end{equation}
\end{prop}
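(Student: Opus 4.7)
The plan is to use an averaging (pigeonhole) argument over the natural partition of $\mathbb{R}$ into intervals of length $r^m$, exploiting the sumset identity $X_{m+n} = X_m + r^m X_n$ from Lemma~\ref{lem:sumset-structure-of-SSSs} to compare covering numbers at scales $r^m$ and $r^{m+n}$.

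Partition $\mathbb{R}$ into half-open intervals $\{I_j\}$ of length $r^m$, and set $S_j = X_m \cap I_j$, $Y_j = X_{m+n} \cap I_j$. Since $X_n \subseteq X \subseteq [0,1)$ gives $r^m X_n \subseteq [0, r^m)$, for any $s \in S_j$ the translate $s + r^m X_n$ sits inside $I_j \cup I_{j+1}$; it is also contained in $X_{m+n}$ by the same lemma. Hence $S_j + r^m X_n \subseteq Y_j \cup Y_{j+1}$, so
\[
N_{r^{m+n}}(S_j + r^m X_n) \;\leq\; N_{r^{m+n}}(Y_j) + N_{r^{m+n}}(Y_{j+1}).
\]
Summing over $j$ and noting that every interval of length $r^{m+n}$ in an optimal cover of $X_{m+n}$ meets at most two consecutive $I_j$'s (and thus contributes to at most two of the $N_{r^{m+n}}(Y_j)$'s), one gets $\sum_j N_{r^{m+n}}(Y_j) \leq 2 N_{r^{m+n}}(X_{m+n})$, and therefore
\[
\sum_j N_{r^{m+n}}(S_j + r^m X_n) \;\leq\; 4\, N_{r^{m+n}}(X_{m+n}).
\]
On the other hand, the family $\{I_j : S_j \neq \emptyset\}$ is a valid $r^m$-cover of $X_m$, so $|\{j : S_j \neq \emptyset\}| \geq N_{r^m}(X_m)$, and hence some $j$ with $S_j \neq \emptyset$ satisfies
\[
N_{r^{m+n}}(S_j + r^m X_n) \;\leq\; \frac{4\, N_{r^{m+n}}(X_{m+n})}{N_{r^m}(X_m)}.
\]

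To finish, fix an auxiliary $\varepsilon > 0$. Using $\bdim X = \alpha$ together with the fact (noted after Lemma~\ref{lem:sumset-structure-of-SSSs}) that $N_{r^m}(X_m)$ and $N_{r^m}(X)$ agree up to a factor of $2$, for all sufficiently large $m$ one has $N_{r^{m+n}}(X_{m+n}) \leq r^{-(m+n)(\alpha+\varepsilon)}$ and $N_{r^m}(X_m) \geq r^{-m(\alpha-\varepsilon)}$, so the bound above becomes $4\, r^{-n\alpha - (2m+n)\varepsilon}$. Substituting $n = km$, this is strictly less than $r^{-(1+\delta)\alpha n}$ whenever $(k+2)\varepsilon < \delta \alpha k$ and $m$ is large enough to absorb the constant $4$; so fixing any such $\varepsilon$ at the outset and taking $J_m$ to be an $I_j$ realizing the minimum proves the proposition. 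The only mildly delicate point is bookkeeping the constants and $\varepsilon$-losses in the covering-number asymptotics; the rest is routine pigeonhole.
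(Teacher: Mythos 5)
Your proof is correct and takes essentially the same route as the paper's: partition $\mathbb{R}$ into length-$r^m$ intervals, use the identity $X_{m+n}=X_m+r^mX_n$ to localize, double-count intervals of length $r^{m+n}$ (your constant $4$ versus the paper's $3$, which is immaterial), pigeonhole over the $\geq N_{r^m}(X_m)$ nonempty pieces, and finish with the covering-number asymptotics $N_{r^\ell}(X_\ell)=r^{-(\alpha+o(1))\ell}$. The only cosmetic difference is that the paper compresses the double-counting step by citing the argument of Lemma~\ref{lem:iterated-covering-number-of-sumset}, whereas you spell it out directly via the sets $Y_j$.
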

\begin{proof}
Fix $m$, set $n=km$, and let $\mathcal{J}$ denote the partition
of $\mathbb{R}$ into intervals $[ur^{m},(u+1)r^{m})$, $u\in\mathbb{Z}$,
whose lengths are $r^{m}$. Since $X_{m}=\bigcup_{J\in\mathcal{J}}(X_{m}\cap J)$,
we can re-write (\ref{eq:cocycle-for-Xn}) as 
\begin{eqnarray}
X_{m+n} & = & X_{m}+r^{m}X_{n}\nonumber \\
 & = & \bigcup_{J\in\mathcal{J}}((X_{m}\cap J)+r^{m}X_{n})\label{eq:local-cocycle-for-Xn}
\end{eqnarray}
Since $X_{m}\cap J\subseteq[ur^{m},(u+1)r^{m})$ for some $u$ and
$r^{m}X_{n}\subseteq[0,r^{m})$, we have $(X_{m}\cap J)+r^{m}X_{n}\subseteq[ur^{m},(u+2)r^{m})$
and in particular each set in the union (\ref{eq:local-cocycle-for-Xn})
is of diameter $\leq2r^{m}$. On the other hand, no interval of length
$r^{m+n}$ intersects more than three of the sets $[ur^{m},(u+2)r^{m})$.
Therefore, arguing as in the proof of Lemma \ref{lem:iterated-covering-number-of-sumset},
\begin{eqnarray*}
N_{r^{m+n}}(X_{m+n}) & \geq & \frac{1}{3}\cdot N_{r^{m}}(X_{m})\cdot\min_{J\in\mathcal{J}\,:\, X_{m}\cap J\neq\emptyset}N_{r^{m+n}}((X_{m+n}\cap J)+r^{m}X_{n})
\end{eqnarray*}
so
\begin{eqnarray*}
\min_{J\in\mathcal{J}\,:\, X_{m}\cap J\neq\emptyset}N_{r^{m+n}}((X_{m+n}\cap J)+r^{m}X_{n}) & \leq & 3\cdot\frac{N_{r^{m+n}}(X_{m+n})}{N_{r^{m}}(X_{m})}\\
 & \leq & 3\cdot\frac{r^{-(\alpha+o(1))(m+n)}}{r^{-(\alpha+o(1))m}}\\
 & = & r^{-(\alpha+o(1))n}\qquad\mbox{as }m\rightarrow\infty
\end{eqnarray*}
The proposition follows.
\end{proof}
Now suppose that it so happens that, for large $m$, the propositions
above produce the \emph{same }interval: $I_{m}=J_{m}$. We then we
would have the following: 

\begin{fprop} \label{prop:small-sumset}Suppose that $\dim X<\min\{1,\sdim X\}$
and $\Delta_{n}\geq2^{-kn}$ for all $n$. Then there is a constant
$\tau>0$ such that, for every $\delta>0$ and all suitably large
$n$, there is a subset $Y_{n}\subseteq[0,1]$ with 
\begin{eqnarray}
N_{r^{n}}(Y_{n}) & \geq & 2^{\tau n}\label{eq:Yn-small}\\
N_{r^{n}}(X_{n}+Y_{n}) & \leq & N_{r^{n}}(X_{n})^{1+\delta}\label{eq:Xn+Yn-small}
\end{eqnarray}
\end{fprop}
\begin{proof}
[``Proof''] Let $\sigma$ be as in Proposition \ref{prop:fiber-covering-numbers}
and take $\tau=\sigma/(k\log(1/r))$. As before write $n=(k+1)m$,
and assume that the intervals $I_{m},J_{m}$ provided by the two previous
propositions coincide for arbitrarily large $m$: $I_{m}=J_{m}=[a_{m},b_{m})$.
Let
\[
Y_{m}=r^{-m}(X_{m}\cap I_{m})
\]
and note that $Y_{m}\subseteq[0,1)$. Now, by choice of $I_{m}$ we
know that $|X_{m}\cap I_{m}|\geq r^{-\sigma m}$, and since $\Delta_{m}\geq2^{-km}=2^{-n}$,
we know that every two points in $X_{m}\cap I_{m}$ are separated
by at least $2^{-n}$. Therefore, 
\begin{eqnarray*}
N_{r^{n}}(Y_{m}) & = & N_{r^{m+n}}(X_{m}\cap I_{m})\\
 & \geq & r^{-\sigma m}
\end{eqnarray*}
Using the identity $N_{t\varepsilon}(tZ)=N_{\varepsilon}(Z)$ with
$t=r^{m}$ and $Z=Y_{m}$, we conclude that
\[
N_{r^{-n}}(Y_{m})\geq r^{-\sigma m}=r^{\tau n}
\]
Similarly, since $X_{n}+Y_{m}=r^{-m}((X_{m}\cap I_{m})+r^{m}X_{n})$,
from the definition of $J_{m}$ and the identity $N_{t\varepsilon}(tZ)=N_{\varepsilon}(Z)$
again, we find that for large enough $n$ (equivalently, $m$),
\begin{eqnarray*}
N_{r^{n}}(X_{n}+Y_{n}) & = & N_{r^{m+n}}((X_{m}\cap I_{m})+r^{m}X_{n})\\
 & \leq & r^{-(1+\delta)\alpha n}\\
 & \leq & N_{r^{n}}(X_{n})^{(1+2\delta)}
\end{eqnarray*}
where in the last inequality we again used the fact that $N_{r^{m}}(X_{m})\sim r^{-n\alpha}$.
\end{proof}
The task of showing that the conclusion of the ``Proposition'' is
impossible falls within the scope of additive combinatorics. Heuristically,
it cannot happen because, being a fractal, $X_{n}$ has very little
``additive structure''. This intuition is correct, as we discuss
in the next section.

But can one really ensure that $I_{m}$,$J_{m}$ coincide? A natural
attempt would be to show that, for a fixed optimal $r^{m}$-cover
of $X_{m}$, ``most'' intervals of length $r^{m}$ can play each
of the roles, and hence a positive fraction can play both. In fact,
for every $\eta>0$, for large $m$ at least a $(1-\eta)$-proportion
of these intervals will be a good choice for $J_{m}$. Unfortunately,
although the number of candidates for $I_{m}$ can be shown to be
exponential in $m$, it could still be exponentially small compared
to $N_{r^{m}}(X_{m})$, and so we cannot conclude that the two families
of ``good'' intervals have members in common. It is possible that
more sophisticated counting can make this work, but the approach that
is currently simplest is to replace covering numbers by the entropy,
at an appropriate scale, of the uniform measure on $X_{m}$. We return
to this in Section \ref{sec:From-Proof-to-Proof}.

\subsection{\label{sub:Getting-a-contradiction}Getting a contradiction }

Our goal now is to demonstrate that the conclusion of ``Proposition''
\ref{prop:small-sumset} is impossible. The argument we give again
falls short of this goal, but it gives the essential ideas of the
proof. Thus, we ask the reader to suspend his disbelief a little longer.

Let $\tau>0$ be as given in ``Proposition'' \ref{prop:small-sumset}.
Choose a very small parameter $\varepsilon>0$ which we shall later
assume is small compared to $\tau$. Choose $m$ large enough that
\[
N_{r^{m}}(X_{m})\geq r^{-m(1-\varepsilon)\alpha}
\]

Apply the inverse theorem \ref{thm:A+B-small} with parameters $\varepsilon,m$
and obtain the promised $\delta>0$. From ``Proposition'' \ref{prop:small-sumset}
obtain the corresponding $Y_{n}\subseteq[0,1)$ satisfying (\ref{eq:Yn-small})
and (\ref{eq:Xn+Yn-small}). 

Write $T^{n}$ for the tree of height $h_{n}=[1/r^{n}]$ associated
to $X_{n}$ and $S^{n}$ for the tree of the same height associated
to $Y_{n}$. From our choice of $\delta$ and (\ref{eq:Xn+Yn-small}),
by the inverse theorem there is a partition $U_{n}\cup V_{n}\cup W_{n}$
of $\{1,\ldots,h_{n}\}$ such that
\begin{enumerate}
\item [(I)] At scales $i\in U_{n}$, a $1-\varepsilon$ fraction of nodes
of $T^{n}$ at level $i$ have full branching for $m$-generations.
\item [(II)] At scales $j\in V_{n}$, a $1-\varepsilon$ fraction of nodes
of $S^{n}$ at level $j$ are $\varepsilon$-concentrated for $m$
generations. 
\item [(III)] $|W_{n}|\leq\varepsilon h_{n}$.
\end{enumerate}
Our first task is to show that $V_{n}$ is not too large. It is quite
clear (or at least believable) that if a tree has few nodes with more
than one child, then it can have only an exponentially small number
of leaves. The same is true if we only assume, for a small $\lambda>0$,
that most nodes are $\lambda$-concentrated. More precisely,
\begin{lem}
\label{lem:concentration-implies-small-tree}Let $S$ be a tree of
height $h$, let $\lambda>0$ and $\ell\geq1$. Suppose that 
\[
\mathbb{P}_{S}(\sigma\in S\,:\, S\mbox{ is }\lambda\mbox{-concentrated at }\sigma\mbox{ for }\ell\mbox{ generations}\}>1-\lambda
\]
Then $|\partial S|\leq2^{\lambda'\cdot h}$ where $\lambda'\rightarrow0$
as $\lambda\rightarrow0$ and $h/\ell\rightarrow\infty$.
\end{lem}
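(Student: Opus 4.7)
The plan is to run an entropy argument on the uniform measure on the leaves. Identify $\log_2|\partial S|$ with $H(\mu_{\partial S})$, where $\mu_{\partial S}$ is uniform on $\partial S$. Sampling a uniform leaf is the same as descending $S$ by the Markov chain whose transition probabilities out of $\sigma$ are $\mu_S(\sigma i)/\mu_S(\sigma)$; under this coupling, the level-$i$ prefix of a uniform random leaf is distributed as $\mu_S$ restricted to level $i$, which is exactly the distribution that appears in the hypothesis. So the hypothesis translates into a statement about the conditional laws of this Markov chain.

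The key local estimate I would use is that at any $\lambda$-concentrated node $\tau$, the conditional distribution of descendants $\ell$ generations below $\tau$ has one atom of mass $\geq 1-\lambda$, and therefore, by the grouping identity for Shannon entropy, has entropy at most $H(\lambda)+\lambda\ell$ (where $H$ denotes binary entropy; the $\lambda\ell$ term accounts for distributing the residual mass $\leq\lambda$ across at most $2^{\ell}-1$ competing descendants). At a non-concentrated node the trivial bound $\ell$ still holds.

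I would then apply the entropy chain rule along depth-$\ell$ blocks, averaged over the starting offset. For each $j\in\{0,1,\ldots,\ell-1\}$, decompose the path from root to a uniform leaf as: an initial segment of $j$ levels (entropy $\leq j$), full $\ell$-blocks with tops at levels $j,j+\ell,j+2\ell,\ldots$, and a final partial block of length $<\ell$. Summing over $j$, each level $i\in\{0,\ldots,h-\ell\}$ is the top of a block for exactly one offset, so after dividing by $\ell$ one obtains
\[
H(\mu_{\partial S})\;\leq\;O(\ell)+\frac{1}{\ell}\sum_{i=0}^{h-\ell}\bigl[\mu_S(G_i)\,(H(\lambda)+\lambda\ell)+\mu_S(B_i)\cdot\ell\bigr],
\]
where $G_i$ and $B_i$ denote the good and bad (non-concentrated) nodes at level $i$. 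The hypothesis unpacks to $\sum_i \mu_S(B_i)<\lambda(h+1)$, while trivially $\mu_S(G_i)\leq 1$. Substituting these and dividing by $h$ yields
\[
\frac{\log_2|\partial S|}{h}\;\leq\;\lambda'\;=\;O\!\left(\frac{\ell}{h}+\frac{H(\lambda)}{\ell}+\lambda\right),
\]
and $\lambda'\to 0$ as $\lambda\to 0$ and $h/\ell\to\infty$, as required.

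The main obstacle is aligning the rigid $\ell$-block structure required by the chain rule with the hypothesis, which supplies information at \emph{every} level rather than at a chosen sparse subsequence; averaging over the offset $j$ resolves this cleanly and guarantees that no ``bad'' level is double-counted or silently dropped. A secondary subtlety is simply bookkeeping: the hypothesis gives the correct $\lambda(h+1)$ budget for $\sum_i\mu_S(B_i)$, and one must verify that this exactly offsets the $\ell$-factor coming from the trivial entropy bound at bad blocks, so that the problematic $\lambda\ell h$ contribution survives normalization as a term proportional to $\lambda$ alone.
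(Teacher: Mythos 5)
Your proof is correct. The paper does not actually give a proof of this lemma---it says ``we leave the proof to the motivated reader'' and remarks that the lemma is superseded by Proposition~\ref{prop:entorpy-averages} (Lemma~3.4 of \cite{Hochman2012a}), which states the identity $\frac{1}{h}H(\partial\widetilde{\theta})=\mathbb{E}_{\sigma\sim\widetilde{\theta}}\bigl(\frac{1}{m}H(\widetilde{\theta}_{\sigma,m})\bigr)+O(m/h)$. Your offset-averaged chain-rule computation is precisely the standard proof of that identity, specialized to $\widetilde{\theta}=\mu_S$ and $m=\ell$; combined with the two pointwise bounds you supply---$H\bigl((\mu_S)_{\sigma,\ell}\bigr)\leq H(\lambda)+\lambda\ell$ at $\lambda$-concentrated nodes via the grouping identity, the trivial bound $\ell$ at the rest, and the budget $\sum_i\mu_S(B_i)<\lambda(h+1)$ obtained by unpacking $\mathbb{P}_S$---it yields $\lambda'=O(\ell/h+H(\lambda)/\ell+\lambda)$, which vanishes in the stated limit. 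So your argument is the intended one; the only stylistic difference is that the paper would factor it through the entropy-average identity as a standalone statement rather than inlining the chain rule, which is also why it describes the lemma as ``superseded'' by that proposition.
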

We leave the proof to the motivated reader. We note that this lemma
is superseded by Proposition \ref{prop:entorpy-averages}, which gives
a stronger statement and has a simpler proof.

We apply the lemma to $S=S^{n}$ with $\ell=m$. Choose $\lambda$
small enough that $\lambda'<\tau$ for large $n$ (hence large $h_{n}$).
Thus $\lambda$ depends only on $\tau$ and we may assume that at
the start we chose $\varepsilon<\frac{1}{2}\lambda$. Suppose that
we had $|V_{n}|>(1-\lambda/2)h_{n}$. Since in each level $j\in V_{n}$
a $(1-\varepsilon)$-fraction of the nodes (with respect to the tree
measure $\mu_{S^{n}}$) is $\varepsilon$-concentrated, at least the
same fraction is $\lambda$-concentrated, and we would conclude 
\begin{eqnarray*}
\mathbb{P}_{S^{n}}(\sigma\in S^{n}\,:\, S^{n}\mbox{ is }\lambda\mbox{-concentrated at }\sigma\mbox{ for }m\mbox{ generations}) & \geq & \frac{1}{h_{n}}|V_{n}|\cdot(1-\varepsilon)\\
 & > & (1-\frac{\lambda}{2})(1-\varepsilon)\\
 & > & 1-\lambda
\end{eqnarray*}
From the lemma we would have $N_{r^{n}}(Y_{n})\leq|\partial S^{n}|<2^{\lambda'h_{n}}<2^{\tau h_{n}}$,
contradicting (\ref{eq:Yn-small}). Thus, we conclude that
\[
|V_{n}|<(1-\frac{\lambda}{2})h_{n}
\]
Consequently, assuming as we may that $\varepsilon<\lambda/6$,
\begin{equation}
|U_{n}|=h_{n}-|V_{n}|-|W_{n}|\geq(\frac{\lambda}{2}-\varepsilon)h_{n}>\frac{\lambda}{3}h_{n}\label{eq:Un-is-large}
\end{equation}

So far we have seen that $U_{n}$ consists of a positive fraction
of the levels of $T^{n}$, and hence a positive fraction of nodes
in $T^{n}$ have full branching for $m$ generations. Our next task
will be to show that most of the remaining nodes have roughly $r^{-\alpha m}$
descendants $m$ generations down. This is where we use self-similarity
again in an essential way. 
\begin{prop}
\label{prop:SSSs-have-uniform-branching}If $m$ is large enough,
then for all large enough $n$, 
\[
\mathbb{P}_{\sigma\sim\mu_{T^{n}}}\left(\begin{array}{c}
\sigma\mbox{ has }\geq2^{(1-\varepsilon)\alpha m}\mbox{ descendants }\\
m\mbox{ generations down in }T
\end{array}\right)>1-\varepsilon
\]
\end{prop}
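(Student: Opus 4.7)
My plan is to combine an averaged \emph{lower} bound on $\log_2 N_\sigma$ coming from the entropy chain rule with an averaged \emph{upper} bound on $N_\sigma$ coming from self-similarity, and then conclude by Markov. Throughout, write $\alpha=\bdim X$ and let $N_\sigma$ denote the number of level-$(i+m)$ descendants of $\sigma\in T^n$ at a level $i\le h_n-m$; recall from Section~\ref{sub:Sumset-structure-of-SSSs} that $|\partial T^n|=2^{\alpha h_n+o(h_n)}$, so that the base-$2$ Shannon entropy $H_{h_n}$ of $\mu_{T^n,h_n}$ equals $(\alpha+o(1))h_n$.

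\emph{Step 1 (Lower bound on the average via the chain rule).} Let $H_i$ be the Shannon entropy of $\mu_{T^n,i}$. Since, conditional on the level-$i$ ancestor being $\sigma$, the level-$(i+m)$ node lies in a set of size $N_\sigma$, the chain rule gives $H_{i+m}-H_i\le \mathbb{E}_{\sigma\sim\mu_{T^n,i}}\log_2 N_\sigma$. Summing along each residue class $\{s,s+m,s+2m,\ldots\}$ modulo $m$ and then over $s\in\{0,\ldots,m-1\}$, the left-hand sides telescope to $m H_{h_n}-O(m^2)=(\alpha+o(1))m h_n$. Dividing by $h_n+1$, and noting that the top $m$ levels carry only $O(m/h_n)$ of the $\mu_{T^n}$-mass, yields
\[
\mathbb{E}_{\sigma\sim\mu_{T^n}}\log_2 N_\sigma \;\ge\;(\alpha-o(1))m.
\]

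\emph{Step 2 (Upper bound on the average via self-similarity).} I plan to exploit the identity $X_n=X_k+r^k X_{n-k}$ of Lemma~\ref{lem:sumset-structure-of-SSSs}, with $k$ chosen so that $r^k\asymp 2^{-i}$. Each cylinder $f_{\underline{j}}(X_{n-k})$ is an affine image of $X_{n-k}$ at scale $r^k$, contributing at most $2^{\alpha(h_n-i-m)+o(h_n)}$ scale-$2^{-(i+m)}$ intervals inside $I_\sigma$. Aggregating and using the identity $\sum_{\sigma}L_\sigma N_\sigma=\sum_{\sigma'\text{ at level }i+m}L_{\mathrm{anc}(\sigma')}$ together with a self-similarity-driven bound on the ancestor leaf-counts, I aim to prove
\[
\mathbb{E}_{\sigma\sim\mu_{T^n,i}} N_\sigma \;\le\; C\cdot 2^{(\alpha+o(1))m}.
\]
Markov then yields $\Pr_{\mu_{T^n}}\!\bigl(N_\sigma\ge 2^{(\alpha+\eta)m}\bigr)\le 2^{-\eta m/2}<\varepsilon/2$ for all large $m$.

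\emph{Step 3 (Conclusion).} Let $p=\Pr_{\mu_{T^n}}(\log_2 N_\sigma<(1-\varepsilon)\alpha m)$ and let $q<\varepsilon/2$ be the bad-tail probability from Step~2. Splitting the expectation in Step~1 over the events ``small'', ``moderate'' and ``large'', bounded respectively by $(1-\varepsilon)\alpha m$, $(\alpha+\eta)m$ and the trivial $m$, gives
\[
(\alpha-o(1))m \;\le\; p(1-\varepsilon)\alpha m + (1-p-q)(\alpha+\eta)m + q m,
\]
which rearranges (after choosing $\eta<\varepsilon^2\alpha/4$ and then $m,n$ large) to $p<\varepsilon$, completing the proof.

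\emph{Main obstacle.} The only substantive step is Step~2. Pointwise, $N_\sigma$ can be as large as the trivial $2^m$, and in the regime of interest $\alpha<\sdim X$ the number of cylinders $f_{\underline{j}}(X_{n-k})$ meeting $I_\sigma$ is not a priori bounded, so a naive cylinder decomposition does not give a usable pointwise estimate on $N_\sigma$. The averaged bound must exploit cancellation between the $\mu_{T^n,i}$-weights $L_\sigma$ and the descendant counts $N_\sigma$; in effect one needs a quantitative form of the heuristic $N_\varepsilon(X\cap I)\lesssim N_{\varepsilon/|I|}(X)$ for small intervals $I$, which is where self-similarity must be used in a non-trivial and genuinely non-pointwise way, much as in the entropy-based proof of \cite{Hochman2012a}.
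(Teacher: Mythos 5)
Your plan is genuinely different from the paper's. The paper's proof of Proposition~\ref{prop:SSSs-have-uniform-branching} is entirely local and makes no use of entropy: it uses the decomposition $X_n=X_u+r^u X_{n-u}$ with $r^u\approx 2^{-\ell-m_0}$ to argue that a $(1-\varepsilon)$-fraction of level-$\ell$ intervals $I$ contain an \emph{entire} translate $x+r^u X_{n-u}$, and then reads off the lower bound $N_{2^{-\ell-m}}(I\cap X_n)\geq N_{2^{-(m-m_0)}}(X_{n-u})\geq 2^{(1-\varepsilon)\alpha m}$ from the covering numbers of the full cylinder. There is no averaging, no chain rule, and crucially no upper tail bound anywhere.

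Your Steps~1 and~3 are fine (Step~1 is essentially the covering-number shadow of Lemma~\ref{prop:entorpy-averages}, and the arithmetic in Step~3 is correct once $q$ decays in $m$). The problem is Step~2, and it is more serious than you indicate. What you need is $\mathbb{E}_{\sigma\sim\mu_{T^n,i}}[N_\sigma]\leq C\,2^{(\alpha+o(1))m}$, i.e.\ a first-moment bound on $N_\sigma$ itself rather than on $\log N_\sigma$. That is a strictly stronger statement than the proposition you are trying to prove: the proposition asserts a lower bound on $N_\sigma$ for most $\sigma$, and is perfectly compatible with a positive-probability set of $\sigma$ having $N_\sigma$ close to the trivial $2^m$ (this is precisely the ``full branching'' scenario that the inverse theorem produces at a positive fraction of levels, and at those levels $\mathbb{E}[N_\sigma]$ is of order $2^m$, not $2^{\alpha m}$, when $\alpha<1$). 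So a uniform-in-$i$ Step~2 cannot hold in the generality you would need, and the burden is on you to identify which levels it holds at and why that suffices; the proposal does not do this.

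The cylinder decomposition you invoke also does not give Step~2, even on average, for exactly the reason you yourself flag: when $\alpha<\sdim X$ (the only regime where the theorem has content) the number of cylinders meeting a level-$i$ interval is not controlled, and the naive bound
\[
\sum_{\sigma}L_\sigma N_\sigma\;\lesssim\;2^{\alpha(h_n-i)+\alpha m}\sum_\sigma M_\sigma^2
\]
(where $M_\sigma$ is the number of level-$i$ cylinders meeting $I_\sigma$) overcounts by a factor that is exponentially large in $i$, since $\sum_\sigma M_\sigma\asymp 2^{\,i\,\sdim X}\gg 2^{\alpha i}$. The ``cancellation between $L_\sigma$ and $N_\sigma$'' you hope for is exactly the hard part, and nothing in the sketch supplies it. There is also a small slip in the cylinder count: a single cylinder of diameter $\asymp 2^{-i}$ contributes roughly $N_{2^{-(m-m_0)}}(X_{n-u})\approx 2^{(\alpha+o(1))m}$ scale-$2^{-(i+m)}$ intervals, not $2^{\alpha(h_n-i-m)+o(h_n)}$ (the latter is a leaf count, not an intermediate-scale covering count).

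In short: the chain-rule lower bound plus Markov is a reasonable idea, but it forces you to prove an upper tail estimate that (a) is stronger than the proposition, (b) fails at the full-branching levels that the rest of the argument specifically relies upon, and (c) is not delivered by the cylinder decomposition. The paper's route — showing that most level-$\ell$ intervals swallow a whole scaled copy of $X_{n-u}$ — sidesteps all of this by producing a pointwise lower bound for most $\sigma$ with no need for any upper bound at all. If you want to salvage the averaging strategy, you should move to the entropy framework of Section~\ref{sec:From-Proof-to-Proof}, where Lemma~\ref{prop:entorpy-averages} gives an exact identity for the averaged \emph{conditional entropies} $H(\widetilde\theta_{\sigma,m})$, avoiding the need to control $N_\sigma$ itself.
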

\begin{proof}
[Proof sketch] A node $\sigma\in T^{n}$ of level $\ell$ corresponds
to an interval $I=[\frac{u}{2^{\ell}},\frac{u+1}{2^{\ell}})$. We
call such intervals level-$\ell$ intervals, and recall that the probability
induced from $\mu_{T^{n}}$ on level-$\ell$ intervals is just proportional
to $|I\cap X_{n}|$. The claim is then that if we choose $0\leq\ell\leq h_{n}$
uniformly and then choose a level-$\ell$ interval $I$ at random,
then with probability at least $1-\varepsilon,$ we will have $N_{2^{-\ell-m}}(I\cap X_{n})\geq2^{-(1-\varepsilon)\alpha m}$.
In order to prove this, it is enough to show that for all levels $0\leq\ell\leq(1-\frac{\varepsilon}{2})h_{n}$,
if we choose a level-$\ell$ interval $I$ at random, then with probability
at least $1-\frac{\varepsilon}{2}$ we have $N_{2^{-\ell-m}}(I\cap X_{n})\geq2^{-(1-\varepsilon)\alpha m}$.

Fix a parameter $m_{0}$ depending on $\varepsilon$ and assume $m,n$
large with respect to it. Observe that $X_{n}$ decomposes into a
union of copies of $X_{n'}$ scaled by approximately $2^{-\ell-m_{0}}$.
More precisely, choosing $u\in\mathbb{N}$ such that $r^{u}\approx2^{-\ell-m_{0}}$,
by (\ref{eq:cocycle-for-Xn}) we have 
\[
X=X_{u}+r^{u}X_{n-u}=\bigcup_{x\in X_{u}}(x+r^{u}X_{n-u})
\]
The idea is now the following. The translates $x+r^{u}X_{n-u}$ in
the union are of diameter $r^{u}\approx2^{-\ell}/2^{m_{0}}$, which
is much smaller than $2^{-\ell}$, and hence with probability at least
$1-\frac{\varepsilon}{2}$ a level-$\ell$ interval $I$ will contain
an entire translate $x+r^{u}X_{n-u}$ from the union above. The details
of the proof are somewhat tedious and we omit them. The point is that,
if $x+r^{u}X_{n-u}\subseteq I$, and assuming that $m$ is large enough
relative to $\varepsilon,m_{0}$, we have 
\begin{eqnarray*}
N_{2^{-\ell-m}}(I\cap X_{n}) & \geq & N_{2^{-\ell-m}}(x+r^{u}X_{n-u})\\
 & = & N_{2^{-\ell-m}r^{-u}}(X_{n-u})\\
 & \approx & N_{2^{-(m-m_{0})}}(X_{n-u})\\
 & > & 2^{(1-\varepsilon)\alpha m}
\end{eqnarray*}
which is what we wanted to prove.
\end{proof}
Now that we know that most nodes in $T^{n}$ have many descendants,
and a positive fraction have the maximal possible number of descendants,
$m$ generations down, the last ingredient we need is a way to use
this information to get a lower bound on the number of leaves in $T^{n}$.
Heuristically, this is the analog of the upper bound we had in Lemma
\ref{lem:concentration-implies-small-tree}.

\begin{fprop} \label{prop:Large-branching-implies-large-tree}Let
$T$ be a tree of height $h$, let $m\geq0$, and suppose that the
nodes of $T$ can be partitioned into disjoint sets $A_{1},\ldots,A_{\ell}$
such that each node $\sigma\in A_{i}$ has $2^{c_{i}m}$ descendants
$m$ generations down. Write $p_{i}=\mathbb{P}_{\mu_{T}}(A_{i})$.
Then
\[
|\partial T|\geq\prod_{i=1}^{\ell}2^{c_{i}\cdot p_{i}h}
\]

\end{fprop}

This ``Proposition'' is, unfortunately, incorrect, and the reader
may find it instructive to look for a counterexample. The statement
could be fixed if we made stronger assumptions than just bounding
the branching in each of the sets $A_{i}$, but the resulting argument
would almost certainly be more complicated than the proof in \cite{Hochman2012a},
and we do not pursue it. The correct statement is given in Proposition
\ref{prop:entorpy-averages} below.

We can now put the pieces together. By the defining property (I) of
$U_{n}$ and equation (\ref{eq:Un-is-large}), the set $A_{1}^{n}\subseteq T^{n}$
of nodes with full branching for $m$-generations satisfies 
\begin{eqnarray*}
\mathbb{P}_{T^{n}}(A_{1}^{n}) & \geq & \frac{1}{h_{n}}|U_{n}|\cdot(1-\varepsilon)\\
 & \geq & \frac{\lambda}{3}(1-\varepsilon)\\
 & \geq & \frac{\lambda}{4}
\end{eqnarray*}
assuming again $\varepsilon$ small compared to $\lambda$ (equivalently
$\tau$). Let $A_{2}^{n}$ denote the set of nodes of $T^{n}\setminus A_{1}^{n}$
which do \emph{not} have at least $2^{m(1-2\varepsilon)\dim X}$ descendants
$m$ generations down; by Proposition \ref{prop:SSSs-have-uniform-branching},
\[
\mathbb{P}_{T^{n}}(A_{2}^{n})<\varepsilon
\]
Therefore if we define $A_{3}^{n}=T^{n}\setminus\{A_{1}^{n}\cup A_{2}^{n}\}$
then all nodes in $A_{3}^{n}$ have at least $2^{m(1-2\varepsilon)\dim X}$
descendants $m$ generations down and
\[
\mathbb{P}_{T^{n}}(A_{3}^{n})=1-\mathbb{P}_{T^{n}}(A_{1}^{n})-\mathbb{P}_{T^{n}}(A_{2}^{n})
\]
In the terminology of the ``Proposition'', we have $p_{1}\geq\lambda/4$
and $p_{2}<\varepsilon$, hence $p_{3}\geq1-p_{1}-\varepsilon$. Also
$c_{1}=1$, $c_{3}=(1-2\varepsilon)\dim X$ and by default $c_{2}\geq0$.
From the ``Proposition'' we find that
\begin{eqnarray*}
|\partial T^{n}| & \geq & 2^{p_{1}h_{n}}\cdot2^{p_{2}\cdot c_{2}h_{n}}\cdot2^{p_{3}(1-2\varepsilon)\dim X\cdot h_{n}}\\
 & \geq & 2^{p_{1}h_{n}+(1-2\varepsilon)\dim X\cdot(1-p_{1}-\varepsilon)h_{n}}\\
 & \geq & 2^{(\dim X+\varepsilon)h_{n}}
\end{eqnarray*}
where in the last inequality we assumed that $\varepsilon$ is small
compared to $p_{1}$ (eqiovalently $\lambda$) and $\tau$. Since
$N_{r^{n}}(X)=|\partial T^{n}|^{1+o(1)}$ as $n\rightarrow\infty$,
this contradicts the definition of $\dim X$.

\subsection{Sums with self-similar sets}

What we ``proved'' above is the following statement which is of
independent interest, and is, moreover, true (a proof follows easily
from the methods of \cite{Hochman2012a}). 
\begin{thm}
For every any self-similar set $X$ with $\dim X<1$ and every $\tau>0$
there is a $\delta>0$ such that for all small enough $\rho>0$ and
any set $Y\subseteq\mathbb{R}$,
\[
N_{\rho}(Y)>(1/\rho)^{\tau}\qquad\implies\qquad N_{\rho}(X+Y)>N_{\rho}(X)^{1+\delta}
\]

\end{thm}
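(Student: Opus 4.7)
The plan is to argue by contradiction, following the template of Section~\ref{sub:Getting-a-contradiction}, but working on $X$ directly at scale $\rho$ without passing to finite approximations or localizing as in Propositions~\ref{prop:fiber-covering-numbers}~and~\ref{prop:local-sumset}. Set $\alpha = \dim X < 1$ and fix $\tau > 0$. Given small $\varepsilon > 0$ and large $m$ (to be chosen in terms of $\alpha$ and $\tau$), let $\delta = \delta(\varepsilon,m)$ be produced by the asymmetric inverse theorem~\ref{thm:A+B-small}. Suppose for contradiction that there exist arbitrarily small $\rho > 0$ and sets $Y \subseteq \mathbb{R}$ with $N_\rho(Y) \geq (1/\rho)^\tau$ while $N_\rho(X+Y) \leq N_\rho(X)^{1+\delta}$. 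Form the binary trees $T,S$ of height $h = \lceil \log_2(1/\rho)\rceil$ associated to $X$ and $Y$, so $|\partial T| = N_\rho(X)^{1+o(1)} = 2^{(\alpha+o(1))h}$ and $|\partial S| \gtrsim 2^{\tau h}$. Theorem~\ref{thm:A+B-small} partitions $\{0,\ldots,h\}$ into $U \cup V \cup W$ with $|W| < \varepsilon h$, full $m$-branching for $T$ at most $U$-level nodes, and $\varepsilon$-concentration for $S$ at most $V$-level nodes.

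The next step is to bound $|V|$ away from $h$, and thereby $|U|$ from below. Since $|\partial S| \gtrsim 2^{\tau h}$, Lemma~\ref{lem:concentration-implies-small-tree} (equivalently, the entropy-averages statement Proposition~\ref{prop:entorpy-averages} discussed in Section~\ref{sec:From-Proof-to-Proof}) forces $|V| \leq (1-\lambda)h$ for some $\lambda = \lambda(\tau) > 0$: otherwise concentration would force $|\partial S| \leq 2^{\lambda' h}$ with $\lambda' \to 0$ as $\lambda \to 0$ and $m \to \infty$, which for appropriate choice of $\lambda$ and $m$ contradicts the lower bound on $|\partial S|$. Consequently $|U| \geq (\lambda - \varepsilon)h \geq \tfrac{\lambda}{2} h$, so a definite positive fraction of the levels exhibit full $m$-generation branching at most nodes of $T$.

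To close, I would invoke the self-similarity of $X$ through Proposition~\ref{prop:SSSs-have-uniform-branching}: at least a $(1-\varepsilon)$-fraction of the nodes of $T$ (sampled under $\mu_T$) have $\geq 2^{(1-\varepsilon)\alpha m}$ descendants $m$ generations down. Partition $T$ into $A_1$ (nodes at $U$-levels with full $m$-branching, contributing rate $1$ per generation), $A_2$ (the exceptional set from Proposition~\ref{prop:SSSs-have-uniform-branching}, with unknown rate), and $A_3 = T \setminus (A_1 \cup A_2)$ (contributing rate $\geq (1-\varepsilon)\alpha$). One has $\mathbb{P}_T(A_1) \geq \tfrac{\lambda}{2}(1-\varepsilon)$ and $\mathbb{P}_T(A_2) \leq \varepsilon$, so a correct branching-to-leaves estimate (the entropy form of ``Proposition''~\ref{prop:Large-branching-implies-large-tree}, i.e.\ Proposition~\ref{prop:entorpy-averages}) yields
\[
\frac{\log_2|\partial T|}{h} \;\geq\; \mathbb{P}_T(A_1) + (1-\varepsilon)\alpha\,\mathbb{P}_T(A_3) \;\geq\; \alpha + \tfrac{\lambda}{2}(1-\alpha) - O(\varepsilon),
\]
which, using $\alpha < 1$ and $\lambda = \lambda(\tau) > 0$, strictly exceeds $\alpha$ once $\varepsilon$ is small, contradicting $|\partial T| = 2^{(\alpha+o(1))h}$. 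The main obstacle is exactly the one flagged in Section~\ref{sec:From-Proof-to-Proof}: ``Proposition''~\ref{prop:Large-branching-implies-large-tree} is false in its naive covering-number form, so the final combining step must in fact be carried out using entropies of uniform measures on the approximations, as in~\cite{Hochman2012a}.
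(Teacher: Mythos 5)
Your proposal is correct and follows essentially the route the paper intends: it runs the Section~\ref{sub:Getting-a-contradiction} argument directly on $X$ and $Y$ (the localization of Propositions~\ref{prop:fiber-covering-numbers} and~\ref{prop:local-sumset}, and hence ``Proposition''~\ref{prop:small-sumset}, becomes unnecessary since $Y$ is given), invoking Theorem~\ref{thm:A+B-small}, Lemma~\ref{lem:concentration-implies-small-tree}, and Proposition~\ref{prop:SSSs-have-uniform-branching}, and you correctly flag that the final branching-to-leaves step must be replaced by the entropy form (Proposition~\ref{prop:entorpy-averages}). The paper offers no separate proof of this theorem beyond remarking that the preceding heuristic argument is what establishes it and that a rigorous proof follows from the entropy methods of \cite{Hochman2012a}, which is exactly your conclusion.
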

There is also a fractal version for Hausdorff dimension:
\begin{thm}
For every any self-similar set $X$ with $\dim X<1$ and every $\tau>0$
there is a $\delta>0$ such that for any set $Y\subseteq\mathbb{R}$,
\[
\dim Y>\tau\qquad\implies\qquad\dim(X+Y)>\dim X+\delta
\]

\end{thm}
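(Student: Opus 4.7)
The plan is to derive this Hausdorff-dimension statement from the covering-number version by running the argument of Section~\ref{sec:Main-reduction-heuristic} with entropies of measures in place of covering numbers of sets. The naive attempt---use Frostman's lemma to pass to a subset $Y'\subseteq Y$ with $N_\rho(Y')\gtrsim\rho^{-\tau'}$ at every small scale $\rho$ and invoke the previous theorem---only bounds the \emph{lower box dimension} of $X+Y'$, which can strictly exceed its Hausdorff dimension, and so gives no information. The fix is to carry probability measures along and work with their scale-$\rho$ dyadic entropies $H(\cdot,\mathcal{D}_\rho)$.

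The setup is as follows. By Frostman's lemma with $\tau'\in(\tau,\dim Y)$, obtain a compactly supported probability measure $\nu$ on $Y$ with $\nu(I)\le C|I|^{\tau'}$ for every interval $I$; this is the measure-theoretic analog of $N_\rho(Y)\ge\rho^{-\tau}$ and yields the uniform lower bound $H(\nu,\mathcal{D}_\rho)\ge\tau'\log(1/\rho)-O(1)$ at every small dyadic scale $\rho$. Let $\mu$ be the self-similar measure on $X$ with uniform weights, for which $\dim\mu=\dim X$ when $\dim X<1$. Since $\mu*\nu$ is supported on $X+Y$ and $\dim_H(\supp\lambda)\ge\dim\lambda$ for any Borel measure $\lambda$ on the line, it suffices to exhibit $\delta=\delta(\tau,X)>0$ with
\[
\dim(\mu*\nu)\ge\dim\mu+\delta.
\]

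This inequality is proved by the measure-theoretic version of the argument of Section~\ref{sec:Main-reduction-heuristic}: the self-similarity of $\mu$ yields a convolution form of the cocycle identity in Lemma~\ref{lem:sumset-structure-of-SSSs}; the role of ``Proposition''~\ref{prop:small-sumset} is played by the assertion that if $\dim(\mu*\nu)<\dim\mu+\delta$ then the scale-$r^n$ entropies of $\mu*\nu$ and of $\mu$ are within $\delta\cdot n\log(1/r)$ of each other; the asymmetric inverse theorem~\ref{thm:A+B-small} in entropy form then partitions the levels of the tree of $\mu$ into a set of levels with near-full branching, a set of levels at which the tree of $\nu$ is $\varepsilon$-concentrated, and a negligible remainder; the Frostman lower bound on $H(\nu,\mathcal{D}_\rho)$ forces the concentration levels to be a small fraction of the total, so most levels are branching levels for $\mu$; finally, combining with the entropy analog of Proposition~\ref{prop:SSSs-have-uniform-branching} and the \emph{correct} entropy-averaging statement (Proposition~\ref{prop:entorpy-averages}, which supersedes the flawed ``Proposition''~\ref{prop:Large-branching-implies-large-tree}) yields $\dim(\mu*\nu)>\dim\mu$, a contradiction.

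The main obstacle is this upgrade from covering numbers to entropies of measures, and in particular the correct entropy-averaging step replacing the false ``Proposition''~\ref{prop:Large-branching-implies-large-tree}; this is precisely what is made rigorous in \cite{Hochman2012a} and outlined in Section~\ref{sec:From-Proof-to-Proof}, and once that machinery is in place the reduction above is essentially formal.
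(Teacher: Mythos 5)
The paper does not actually prove this theorem: it only remarks that the box-dimension analogue follows at once from the covering-number theorem preceding it, and explicitly defers the Hausdorff version to \cite{Hochman2013a} as requiring ``slightly more effort''. So your proposal has to stand on its own, and while you correctly diagnose why the naive route (Frostman subset plus the covering-number theorem) only controls lower box dimension, the plan as written has two genuine gaps.

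First, you assert that the uniform self-similar measure $\mu$ on $X$ satisfies $\dim\mu=\dim X$ whenever $\dim X<1$. This is false in the presence of exact overlaps (if, say, $f_{1}=f_{2}$ and the reduced system $\{f_{1},f_{3}\}$ is strongly separated, then $\mu$ is the $(\tfrac{2}{3},\tfrac{1}{3})$-weighted self-similar measure on the reduced system and $\dim\mu=\frac{\log3-\frac{2}{3}\log2}{\log(1/r)}<\frac{\log2}{\log(1/r)}=\dim X$), and absent exact overlaps it is not a known general fact; it belongs to the same circle of open problems as Conjecture \ref{conj:main}. The paper is careful to use only the one-sided inequality $\alpha\le\dim X$ for the entropy dimension $\alpha$ of $\mu$ (Section \ref{sub:Reduction-to-convolution}). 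Since your argument delivers a bound of the form $\dim(\mu*\nu)\ge\dim\mu+\delta$, without $\dim\mu\ge\dim X$ you only reach $\dim(X+Y)\ge\alpha+\delta$, which may fall short of $\dim X+\delta$. Replacing $\mu$ by a Frostman measure on $X$ of exponent close to $\dim X$ repairs this deficit but destroys the self-similarity needed for the uniform-branching step (Proposition \ref{prop:SSSs-have-uniform-branching} / Lemma \ref{lem:SSMs-are-entropy-uniform}).

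Second, and more seriously, your route to $\dim(\mu*\nu)\ge\dim\mu+\delta$ goes through uniform lower bounds on the scale-$\rho$ entropies $H(\mu*\nu,\mathcal{I}_{\rho})$. Such bounds control the entropy dimension of $\mu*\nu$, hence the covering numbers and the lower box dimension of its support, but not its Hausdorff dimension: a purely atomic measure, whose support is countable and hence has Hausdorff dimension zero, can have entropy dimension $1$, for the same reason that a countable set can have full box dimension. So the plan reproduces, one level up, exactly the box-versus-Hausdorff gap it set out to close. Passing from global entropy bounds to Hausdorff dimension requires almost-everywhere lower bounds on the local dimension of $\mu*\nu$ (a local entropy-averaging argument, or exact dimensionality of the convolution, neither of which is automatic since $\nu$ is only a Frostman measure). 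That step is presumably the ``slightly more effort'' the paper alludes to, and it is not ``essentially formal''.
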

For box dimension (lower or upper) the analogous statement follows
directly from the previous theorem. The version for Hausdorff dimension
requires slightly more effort and will appear in \cite{Hochman2013a}
along with the analog for measures.

\section{\label{sec:From-Proof-to-Proof}Entropy}

In this final section we discuss how to turn the outline above into
a valid proof. The main change is to replace sets by measures and
covering numbers by entropy. Each of the three parts of the argument
(inverse theorem, reduction to a statement about sumsets, and the
analysis of the sums) has an entropy analog which we indicate below,
along with a reference to the relevant part of \cite{Hochman2012a}.

The reader should note that the outline given below is designed to
match as closely as possible the argument from the previous section,
rather than the proof from \cite{Hochman2012a}. Although the ideas
and many of the details are the same, the original proof is direct,
whereas the one here is by contradiction. For this reason not all
of the statements below have exact analogs in \cite{Hochman2012a}.

\subsection{\label{sub:Entropy}Entropy }

We assume that the reader is familiar with the basic properties of
Shannon entropy, see for example \cite{CoverThomas06}. Let $\mathcal{I}_{\varepsilon}=\{[k\varepsilon,(k+1)\varepsilon)\}_{k\in\mathbb{Z}}$,
which is a partition of $\mathbb{R}$ into intervals of length $\varepsilon$.
The entropy $H(\mu,\mathcal{I}_{\varepsilon})$ of $\mu$ at scale
$\varepsilon$ is the natural measure-analog of the covering number
$N_{\varepsilon}(X)$, albeit in a logarithmic scale. For a measure
$\nu$ supported on a set $X$, the two quantities are related by
the basic inequality 
\[
0\leq H(\nu,\mathcal{I}_{\varepsilon})\leq\log\#\{I\in\mathcal{I}_{\varepsilon}\,:\, X\cap I\neq\emptyset\}\leq\log N_{\varepsilon}(X)+O(1)
\]
(the $O(1)$ error is because we are choosing a sub-cover of $X$
from a fixed cover of $\mathbb{R}$ rather than allowing arbitrary
$\varepsilon$-intervals). We introduce the normalized $\varepsilon$-scale
entropy: 
\[
H_{\varepsilon}(\nu)=\frac{1}{\log(1/\varepsilon)}H(\nu,\mathcal{I}_{\varepsilon})
\]
Thus, for $\nu$ supported on a set $X$ with well-defined box dimension,
the previous inequality implies 
\begin{equation}
\limsup_{\varepsilon\rightarrow0}H_{\varepsilon}(\nu)\leq\bdim X\label{eq:entropy-vs-dimension}
\end{equation}

\subsection{\label{sub:Inverse-theorems-for-entropy}Inverse theorems for entropy}

The measure-analog of the sumset operation is convolution, which for
discrete probability measures $\mu=\sum p_{i}\delta_{x_{i}}$ and
$\nu=\sum q_{j}\delta_{y_{j}}$ is%
\footnote{In general there is a similar formula: $\mu*\nu=\int\int\delta_{x+y}d\mu(x)d\nu(y)$,
where the integral is interpreted as a measure by integrating against
Borel functions.%
} 
\[
\mu*\nu=\sum_{i,j}p_{i}q_{j}\delta_{x_{i}+y_{j}}
\]
The entropy-analog of the small doubling condition $|A+A|\leq C|A|$
is the inequality $H(\mu*\mu)\leq H(\mu)+C'$, where $H(\mu)$ is
the entropy of a \,measure with respect to the partition into points
(remember that entropy is like cardinality, but in logarithmic scale).
Alternatively we could discretize at scale $\varepsilon$, giving
$H_{\rho}(\mu*\mu)\leq H_{\rho}(\mu)+O(1/\log(1/\varepsilon))$. Tao
\cite{Tao2010} has shown that such inequalities have implications
similar to Freiman's theorem. Related results were also obtained by
Madiman \cite{Madiman2008}, see also \cite{MadimanMarcusTetali2012}.

The regime that interests us is, as before, the analog of $|A+B|\leq|A|^{1+\delta}$,
which by formal analogy takes the form $H_{\rho}(\mu*\nu)\leq(1+\delta)H_{\rho}(\mu)$.
When $\mu$ is supported on $[0,1]$ we have $H_{\rho}(\mu)\leq1+o(1)$
as $\rho\rightarrow0$, and this inequality is implied (and in the
cases that interest us essentially equivalent to) 
\begin{equation}
H_{\rho}(\mu*\nu)\leq H_{\rho}(\mu)+\delta\label{eq:entropy-small-growth}
\end{equation}

Before stating the inverse theorem for entropy we need a few more
definitions. Consider the lift of $\mu$ to a tree-measure $\widetilde{\mu}$
on the full binary tree of height $h$ (see Section \ref{sub:Trees-and-tree-measures}).
Given a node $\sigma=\sigma_{1}\ldots\sigma_{k}$ and $m\in\mathbb{N}$,
write $\sigma\{0,1\}^{m}$ for the set of descendants of $\sigma$
$m$-generations down. Let $\widetilde{\mu}_{\sigma,m}$ denote the
probability measure on $\sigma\{0,1\}^{m}$ that assigns to each node
its normalized weight according to $\widetilde{\mu}$. Since $\sum_{\eta\in\sigma\{0,1\}^{m}}\widetilde{\mu}(\eta)=\widetilde{\mu}(\sigma)$,
this measure is given by $\widetilde{\mu}_{\sigma,m}(\eta)=\widetilde{\mu}(\eta)/\widetilde{\mu}(\sigma)$. 

We say that $\widetilde{\mu}$ is $\delta$-concentrated at $\sigma$
for $m$ generations if $H(\widetilde{\mu}_{\sigma,m})<\delta$, that
is, if $-\frac{1}{m}\sum_{\eta\in\sigma\{0,1\}^{m}}\frac{\widetilde{\mu}(\eta)}{\widetilde{\mu}(\sigma)}\log\frac{\widetilde{\mu}(\eta)}{\widetilde{\mu}(\sigma)}<\delta$.
For a tree measure $\mu_{T}$ associated to a tree $T$ and for fixed
$m$, this is equivalent to $T$ being $\delta'$-concentrated for
$m$ generations at $\sigma$ for an appropriate $\delta'$ which
tends to $0$ together with $\delta$. We say that $\widetilde{\mu}$
is $\delta$-uniform at $\sigma$ for $m$ generations if $H(\widetilde{\mu}_{\sigma,m})>\log m-\delta$.
Note that for $m$ fixed, when $\delta$ is small enough this implies
that $\widetilde{\mu}(\eta)>0$ for all $\eta\in\sigma\{0,1\}^{m}$,
so this indeed generalizes full branching. We can now state the inverse
theorem:
\begin{thm}
[Theorem 2.7 of \cite{Hochman2012a}] \label{thm:inverse-thm-entropy}For
every $\varepsilon>0$ and $m\geq1$, there is a $\delta>0$ such
that for sufficiently small $\rho>0$ the following holds. Let $\mu,\nu$
be probability measures on $[0,1]$ and suppose that
\[
H_{\rho}(\mu*\nu)\leq H_{\rho}(\mu)+\delta
\]
Let $\widetilde{\mu},\widetilde{\nu}$ denote the lifts of $\mu,\nu$
to the full binary trees of height $h=\left\lceil \log_{2}(1/\rho)\right\rceil $.
Then there is a partition of the levels $\{0,\ldots,h\}$ into three
sets $U\cup V\cup W$ such that
\begin{enumerate}
\item For $i\in U$,
\[
\mathbb{P}_{\sigma\sim\widetilde{\mu}}(\widetilde{\mu}\mbox{ is }\varepsilon\mbox{-uniform at }\sigma\mbox{ for }m\mbox{ generations}\;|\;\sigma\mbox{ is in level }i)>1-\varepsilon
\]

\item For $j\in V$,
\[
\mathbb{P}_{\sigma\sim\widetilde{\mu}}(\widetilde{\nu}\mbox{ is }\varepsilon\mbox{-concentrated at }\sigma\mbox{ for }m\mbox{ generations}\;|\;\sigma\mbox{ is in level }i)>1-\varepsilon
\]

\item $|W|<\delta n$.
\end{enumerate}
\end{thm}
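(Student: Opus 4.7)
The plan is to analyze the difference $H_\rho(\mu*\nu) - H_\rho(\mu)$ scale by scale, establish a local dichotomy at each scale, and conclude by averaging. For $0 \le i \le h-m$ define the normalized $m$-scale conditional entropy
\[
g_i(\lambda) \;=\; \tfrac{1}{m}\, H\bigl(\lambda,\,\mathcal{I}_{2^{-(i+m)}}\bigm|\mathcal{I}_{2^{-i}}\bigr) \;\in\; [0,1].
\]
Two basic identities anchor the argument. First, by the chain rule, $\sum_{i=0}^{h-m} g_i(\lambda) = H(\lambda,\mathcal{I}_{2^{-h}}) + O(m)$, so averaging $g_i$ over $i$ recovers $H_\rho(\lambda)$ up to an $O(m/h)$ error. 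Second, $g_i(\lambda)$ is exactly $\mathbb{E}_{\sigma \sim \widetilde\lambda,\,|\sigma|=i}\bigl[\tfrac{1}{m} H(\widetilde\lambda_{\sigma,m})\bigr]$; since the integrand lies in $[0,\log 2]$, Markov's inequality converts a value of $g_i(\lambda)$ near $1$ into $\varepsilon$-uniformity of $\widetilde\lambda$ at most level-$i$ nodes, and a value near $0$ into $\varepsilon$-concentration at most level-$i$ nodes.

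The core technical input is a \emph{local dichotomy}: there exists $c = c(\varepsilon,m)>0$ such that for every level $i$,
\[
g_i(\mu*\nu) - g_i(\mu) \;\ge\; c,
\]
unless either (U$_i$) at least a $(1-\varepsilon)$-fraction (in $\widetilde\mu$-mass) of level-$i$ nodes $\sigma$ carry $\widetilde\mu$ that is $\varepsilon$-uniform for $m$ generations, or (V$_i$) at least a $(1-\varepsilon)$-fraction of level-$i$ nodes $\sigma$ (again weighted by $\widetilde\mu$) carry $\widetilde\nu$ that is $\varepsilon$-concentrated for $m$ generations. This is the entropy/measure analog of Theorem~\ref{thm:A+B-small}, and one proves it cell-by-cell: within a typical $2^{-i}$-cell, the stochastic map $\mu \mapsto \mu*\nu$ strictly increases the $m$-scale conditional entropy on the $2^m$ children unless $\nu$'s contribution at that scale collapses to a near-point mass or $\mu$'s distribution already saturates the children.

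Given the local dichotomy, set $U = \{i : (\text{U}_i) \text{ holds}\}$, $V = \{j : (\text{V}_j) \text{ holds and } j \notin U\}$, and $W$ the remaining levels. Averaging the local inequality over $i \in W$,
\[
\delta + O(m/h) \;\ge\; H_\rho(\mu*\nu) - H_\rho(\mu) \;\ge\; \frac{1}{h-m+1} \sum_{i \in W}\bigl(g_i(\mu*\nu) - g_i(\mu)\bigr) \;\ge\; \frac{c\,|W|}{h+1}.
\]
Thus $|W| \le (\delta + O(m/h))(h+1)/c(\varepsilon,m)$. Choosing $\delta$ small relative to $\varepsilon \cdot c(\varepsilon,m)$ and taking $\rho$ small enough that $m/h$ is negligible forces $|W| < \varepsilon h$. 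Conditions (U$_i$) and (V$_i$) are exactly conclusions (1) and (2).

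\textbf{Main obstacle.} The entire argument hinges on the local dichotomy: a per-scale, per-cell inequality quantifying how much entropy convolution must generate when neither measure is degenerate on the scales $2^{-i}$ and $2^{-(i+m)}$. A clean quantitative lower bound $c(\varepsilon,m)>0$ is typically extracted from a compactness/continuity argument on probability measures supported on finitely many dyadic cells, since the excess $g_i(\mu*\nu) - g_i(\mu)$ is a continuous functional that vanishes only at the boundary points where $\mu$ is uniform or $\nu$ is a point mass. The asymmetry in the conclusion---$\widetilde\mu$ appears in (U$_i$), $\widetilde\nu$ in (V$_i$), both sampled by $\widetilde\mu$---reflects the asymmetric hypothesis and must be respected by the local argument, which accounts for the main delicacy.
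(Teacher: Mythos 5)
First, a point of reference: the paper does not prove this theorem. It is imported verbatim as Theorem 2.7 of \cite{Hochman2012a}, and its proof is the main technical content of that paper, so there is no in-text argument to compare yours against. Your scaffolding is sound and does match the real proof's outer shell: the chain rule $\sum_i g_i(\lambda) = H(\lambda,\mathcal{I}_{2^{-h}}) + O(m)$ is exactly Lemma 3.4 of \cite{Hochman2012a} (Lemma \ref{prop:entorpy-averages} here), the identification of $g_i(\lambda)$ with $\mathbb{E}_{\sigma}[\frac1m H(\widetilde\lambda_{\sigma,m})]$ is correct, and Markov's inequality does convert averages into ``most nodes'' statements. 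The averaging step at the end is also fine.

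The gap is the ``local dichotomy,'' which is false in the form you need it, and your proposed proof of it (compactness, plus the claim that the excess vanishes only when $\mu$ is locally uniform or $\nu$ locally atomic) misidentifies the degenerate configurations. Work in a single cell, so $\theta,\eta$ are measures on $\{0,\ldots,2^m-1\}$. Take $\theta$ uniform on the interval $\{0,\ldots,2^{(1-2\varepsilon)m}\}$ and $\eta$ uniform on the shorter interval $\{0,\ldots,2^{\varepsilon m}\}$. Then $H(\theta)=(1-2\varepsilon)m\log 2$ (far from uniform) and $H(\eta)=\varepsilon m\log 2$ (far from atomic), yet $\theta*\eta$ agrees with a translate of $\theta$ except on a set of mass $O(2^{-(1-3\varepsilon)m})$, so $H(\theta*\eta)-H(\theta)=o(1)$ as $m\to\infty$. (The same happens whenever $\eta$ is supported on a short sub-progression of a long arithmetic progression on which $\theta$ is uniform --- this is precisely the ``first trivial case'' $A=\{1,\ldots,n\}$, $B\subseteq\{1,\ldots,n\}$ discussed before Theorem \ref{thm:A+B-small}.) Compactness does give \emph{some} $c(\varepsilon,m)>0$ on the set $\{H(\theta)\le(1-\varepsilon)m\log 2,\ H(\eta)\ge\varepsilon m\log 2\}$, since the gain vanishes exactly only when $\eta$ is a point mass; but the example shows $c(\varepsilon,m)$ decays exponentially in $m$. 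This is fatal, because passing from $g_i(\mu*\nu)$ to the cell-wise convolutions $\mu_{\sigma,m}*\nu_{\tau,m}$ costs an unavoidable $O(1)$ of unnormalized entropy per level (convolutions of conditional measures spill across dyadic cell boundaries, and level-$i$ cells of $\mu*\nu$ mix contributions from many pairs of cells), so your guaranteed per-level gain $\varepsilon^2 c(\varepsilon,m)$ is swamped by the error terms for exactly the large $m$ that the application requires.

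The deeper point is that the interval-within-interval (and compatible-GAP) configurations cannot be classified level by level: whether such a level belongs to $U$ or to $V$ is only decided by looking at \emph{other} scales, where the long interval resolves into full branching for $\mu$ or the short interval resolves into concentration for $\nu$. Ruling out the persistence of such intermediate arithmetic structure across a positive fraction of scales is where the genuine additive combinatorics enters: in \cite{Hochman2012a} this is done via an entropy analogue of the Freiman-type inverse theorems, using the Kaimanovich--Vershik/Tao submodularity inequality for repeated self-convolutions together with the Berry--Esseen theorem. No single-scale compactness argument can substitute for that input. (Separately, and minorly: your condition (V$_i$) weights $\widetilde\nu$'s concentration by $\widetilde\mu$-mass; this follows a typo in the statement as printed --- conclusion (2) should read $\mathbb{P}_{\sigma\sim\widetilde\nu}$, and (3) should read $|W|<\varepsilon h$ --- and the ``asymmetry'' you flag as a delicacy is an artifact rather than a feature to be respected.)
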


\subsection{\label{sub:Reduction-to-convolution}Reduction of Theorem \ref{thm:main}
to a convolution inequality}

We return to our IFS $\Phi$ with attractor $0\in X\subseteq[0,1]$,
as in Section \ref{sec:Main-reduction-heuristic}. Define measures
$\mu^{(n)}$ analogous to $X_{n}$ by 
\[
\mu^{(n)}=\frac{1}{|\Lambda|^{n}}\sum_{\underline{i}\in\Lambda^{n}}\delta_{f_{\underline{i}}(0)}
\]
Write $S_{t}\mu(A)=\mu(t^{-1}A)$ (this is the usual push-forward
of $\mu$ by $S_{t}$). Then the analog of the sumset relation $X_{m+n}=X_{m}+r^{m}X_{n}$
is 
\[
\mu^{(m+n)}=\mu^{(m)}*S_{r^{m}}\mu^{(n)}
\]
The derivation is elementary, using the definition of convolution,
equation (\ref{eq:iterated-f-b}) and the identity $S_{t}\delta_{y}=\delta_{ty}$.
Next, as in Section \ref{sub:Sumset-structure-of-SSSs}, if we define
$s_{m}=H(\mu^{(m)},\mathcal{I}_{r^{m}})$ then the sequence $s_{n}$
is almost super-additive in the sense that $s_{m+n}\geq s_{m}+s_{n}-O(1)$.
This is proved by a similar argument to the covering number case but
in the language of entropy. It follows that the limit 
\[
\alpha=\lim_{m\rightarrow\infty}H_{r^{m}}(\mu^{(m)})
\]
exists. Since $\mu^{(m)}$ is supported on $X$, by (\ref{eq:entropy-vs-dimension})
we have $\alpha\leq\dim X$.

Turning to Theorem \ref{thm:main}, write 
\[
\beta=\min\{1,\sdim X\}
\]
and assume for the sake of contradiction that $\dim X<\beta$ and
$\Delta_{n}\geq2^{-kn}$ for some $k$. Since $\alpha\leq\dim X$,
we can choose $\varepsilon>0$ so that $\alpha<\beta-\varepsilon$.
Arguing analogously to Proposition \ref{prop:fiber-covering-numbers}
one obtains the analogous result:
\begin{prop}
\label{prop:entropy-large-fibers}There is a constant $c$ (depending
on $\beta,\varepsilon$) such that for large enough $m$,
\[
\mu^{(m)}\left(\bigcup\left\{ I\in\mathcal{I}_{r^{m}}\,:\, H_{r^{m+n}}(\mu_{I}^{(m)})>cm\right\} \right)>c
\]

\end{prop}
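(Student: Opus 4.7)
The plan is to mimic the pigeonhole argument from the proof of Proposition \ref{prop:fiber-covering-numbers}, but in entropy language, so that the single ``heavy'' $r^{m}$-interval is replaced by a positive-mass family of cells whose conditional measures carry large entropy at the finer scale $r^{m+n}$. The key identity is the conditional entropy decomposition: since $\mathcal{I}_{r^{m+n}}$ refines $\mathcal{I}_{r^{m}}$ up to an $O(1)$ error from cells straddling boundaries,
\begin{equation*}
H(\mu^{(m)},\mathcal{I}_{r^{m+n}})=H(\mu^{(m)},\mathcal{I}_{r^{m}})+\sum_{I\in\mathcal{I}_{r^{m}}}\mu^{(m)}(I)\,H(\mu^{(m)}_{I},\mathcal{I}_{r^{m+n}})+O(1).
\end{equation*}
I would bound the left-hand side from below and the first term on the right from above, and read off the desired conclusion from the residual.

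For the lower bound on the LHS, the absence of exact overlaps gives $|X_{m}|=|\Lambda|^{m}$, and the separation $\Delta_{m}\geq 2^{-km}$ ensures that, once $n$ is chosen at least of order $km$ so that $r^{m+n}<\Delta_{m}/2$, each atom of $\mu^{(m)}$ occupies its own cell of $\mathcal{I}_{r^{m+n}}$. Hence $H(\mu^{(m)},\mathcal{I}_{r^{m+n}})=m\log|\Lambda|\geq \beta m\log(1/r)$. For the upper bound on $H(\mu^{(m)},\mathcal{I}_{r^{m}})$, the definition of $\alpha$ together with the standing hypothesis $\alpha\leq\dim X<\beta-\varepsilon$ yields $H(\mu^{(m)},\mathcal{I}_{r^{m}})\leq(\beta-\tfrac{\varepsilon}{2})m\log(1/r)$ for all sufficiently large $m$. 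Subtracting, we obtain
\begin{equation*}
\sum_{I\in\mathcal{I}_{r^{m}}}\mu^{(m)}(I)\,H(\mu^{(m)}_{I},\mathcal{I}_{r^{m+n}})\;\geq\;\tfrac{\varepsilon}{2}\,m\log(1/r)-O(1).
\end{equation*}

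The remainder is a Markov-type averaging. Each summand satisfies the trivial upper bound $H(\mu^{(m)}_{I},\mathcal{I}_{r^{m+n}})\leq n\log(1/r)+O(1)$, since $\mu^{(m)}_{I}$ is supported on an interval of length $r^{m}$ and we partition at scale $r^{m+n}$. Setting $A=\{I:H(\mu^{(m)}_{I},\mathcal{I}_{r^{m+n}})>\tfrac{\varepsilon}{4}m\log(1/r)\}$ and splitting the sum, the contribution from $I\notin A$ is at most $\tfrac{\varepsilon}{4}m\log(1/r)$, so the contribution from $I\in A$ is at least $\tfrac{\varepsilon}{4}m\log(1/r)-O(1)$, which forces $\mu^{(m)}(\bigcup A)\geq c$ for some $c>0$ depending only on $\beta,\varepsilon,k,r$. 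Choosing the threshold constant in the statement to be this same $c$ (after absorbing the $\log(1/r)$ and the normalization convention of $H_{\rho}$) gives the claim.

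The main obstacle is the trade-off between the separation exponent $k$, the ratio $n/m$, and the final constant. One must take $n$ large enough (roughly $n\geq km\log 2/\log(1/r)$) for the separation assumption to promote the atom count $|X_{m}|=|\Lambda|^{m}$ into a genuine lower bound on the LHS; yet $n/m$ must stay bounded, since the trivial upper bound $n\log(1/r)$ on each conditional entropy otherwise swamps the Markov step and the constant $c$ degenerates. This is the same scaling constraint that drove the choice $n=km$ in Proposition \ref{prop:local-sumset}, and fixing $n$ to be a suitable fixed multiple of $m$ (depending on $k$ and $r$) resolves it uniformly in $m$.
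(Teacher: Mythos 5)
Your argument is exactly the entropy-language version of the pigeonhole in Proposition \ref{prop:fiber-covering-numbers} that the paper prescribes (``arguing analogously''): the conditional-entropy decomposition replaces the ratio $|X_m|/N_{r^m}(X_m)$, the separation $\Delta_m\geq 2^{-km}$ with $n$ a fixed multiple of $m$ gives the fine-scale lower bound $m\log|\Lambda|\geq\beta m\log(1/r)$, and the Markov step converts the average into a positive-mass set of intervals. This is correct and is essentially the paper's intended proof; your closing remark rightly flags that the constant and the threshold must absorb the normalization of $H_{\rho}$ (as literally stated, the normalized quantity $H_{r^{m+n}}(\mu^{(m)}_I)$ is bounded by $1$, so the inequality $>cm$ should be read for the unnormalized entropy).
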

This lemma does not appear explicitly in \cite{Hochman2012a}, since
that is a direct proof. Ours is a proof by contradiction, and the
contradiction can be interpreted as showing that the lemma above is
false. This falsehood is demonstrated directly in the last displayed
equation of Section 5.3 of \cite{Hochman2012a}.

Next, for a probability measure $\nu$ and set $E$ with $\nu(E)>0$,
write $\nu_{E}$ for the conditional measure on $E$, that is, $\nu_{E}(A)=\frac{1}{\nu(E)}\nu(E\cap A)$.
The analog of Proposition \ref{prop:local-sumset} then holds, again
with an analogous proof:
\begin{prop}
[See Equation (40) of \cite{Hochman2012a}]\label{prop:local-convolution}For
every $\delta>0$, as $m\rightarrow\infty$ 
\[
\mu^{(m)}\left(\bigcup\left\{ I\in\mathcal{I}_{r^{m}}\,:\,\frac{1}{n}H_{r^{m+n}}(\mu_{I}^{(m)}*S_{r^{m}}\mu^{(n)})\leq\alpha+\delta\right\} \right)\geq1-o(1)
\]

\end{prop}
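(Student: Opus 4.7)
The plan is to translate the covering-number proof of Proposition~\ref{prop:local-sumset} into entropy language, using the convolution identity $\mu^{(m+n)}=\mu^{(m)}*S_{r^{m}}\mu^{(n)}$ in place of the sumset identity $X_{m+n}=X_{m}+r^{m}X_{n}$. First I would decompose $\mu^{(m)}$ along the partition $\mathcal{I}_{r^{m}}$ into its conditional pieces $\mu_{I}^{(m)}$, which gives
\[
\mu^{(m+n)}\;=\;\sum_{I\in\mathcal{I}_{r^{m}}}\mu^{(m)}(I)\,\bigl(\mu_{I}^{(m)}*S_{r^{m}}\mu^{(n)}\bigr).
\]
Each summand $\nu_{I}:=\mu_{I}^{(m)}*S_{r^{m}}\mu^{(n)}$ is supported in the two-interval union $I\cup I^{+}$ of length $2r^{m}$, so a priori $H(\nu_{I},\mathcal{I}_{r^{m+n}})\leq n\log(1/r)+O(1)$ just from cardinality of the support.

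Next I would control the $\mu^{(m)}$-weighted average
\[
A\;:=\;\sum_{I}\mu^{(m)}(I)\,H(\nu_{I},\mathcal{I}_{r^{m+n}}).
\]
Concavity of entropy in the measure gives $A\leq H(\mu^{(m+n)},\mathcal{I}_{r^{m+n}})=s_{m+n}$, while the complementary convex-sum inequality $H(\sum p_{I}\nu_{I},\mathcal{P})\leq H(\{p_{I}\})+\sum p_{I}H(\nu_{I},\mathcal{P})$ applied with $p_{I}=\mu^{(m)}(I)$ and $\mathcal{P}=\mathcal{I}_{r^{m+n}}$ gives $A\geq s_{m+n}-s_{m}$. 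Since $\alpha=\lim H_{r^{m}}(\mu^{(m)})$ exists, we have $s_{m+n}-s_{m}=\alpha\, n\log(1/r)+o(n)$ and $s_{m+n}=\alpha(m+n)\log(1/r)+o(m+n)$; taking $n$ large enough relative to $m$ that $s_{m}/n\to 0$ pinches $A$ to the target value $(\alpha+o(1))\,n\log(1/r)$.

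The final step is a Markov-type argument promoting the bound on the average to a ``most $I$'' statement: combining $\mathbb{E}_{I}[H(\nu_{I},\mathcal{I}_{r^{m+n}})]\leq(\alpha+o(1))\,n\log(1/r)$ with the deterministic ceiling $H(\nu_{I},\mathcal{I}_{r^{m+n}})\leq n\log(1/r)+O(1)$, one bounds the $\mu^{(m)}$-mass of the set of $I$ that violate the threshold $(\alpha+\delta)\,n\log(1/r)$.

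The hardest part will be pushing this last Markov step from the trivial constant-fraction bound $\alpha/(\alpha+\delta)$ it delivers on its own to the asserted $1-o(1)$. This requires either a finer multi-scale entropy decomposition (slicing the range of scales $[r^{m},r^{m+n}]$ into many sub-blocks and averaging on each), or exploiting that the self-similar structure of $\mu^{(m)}$ forces concentration beyond what a one-sided single-scale average can detect, so that $H(\nu_{I},\mathcal{I}_{r^{m+n}})$ is in fact close to its mean for typical $I$. A separate, minor nuisance is the $O(1)$ boundary error that arises because each $\nu_{I}$ spills from $I$ into $I^{+}$ rather than being strictly supported in $I$; this is handled by choosing $n$ large enough that the accumulated boundary contributions are absorbed into the $o(n)$ terms.
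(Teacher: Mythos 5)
Your setup is right: the decomposition $\mu^{(m+n)}=\sum_{I}\mu^{(m)}(I)\,\nu_{I}$ with $\nu_{I}=\mu_{I}^{(m)}*S_{r^{m}}\mu^{(n)}$, and the computation (via the conditional-entropy identity, up to $O(1)$ boundary errors from $\nu_I$ spilling into $I^{+}$ and from $\mathcal{I}_{r^{m+n}}$ not exactly refining $\mathcal{I}_{r^m}$) that the $\mu^{(m)}$-average $A=\sum_I\mu^{(m)}(I)H(\nu_I,\mathcal{I}_{r^{m+n}})$ equals $s_{m+n}-s_m+O(1)=(\alpha+o(1))\,n\log(1/r)$, which already holds in the paper's regime $n=km$ without taking $n\gg m$. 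But the Markov step as you've set it up genuinely does not close: an upper bound on the average together with the deterministic \emph{ceiling} $H(\nu_I)\le n\log(1/r)+O(1)$ only yields the constant-fraction bound $\alpha/(\alpha+\delta)$, as you yourself note, and neither of the repairs you sketch (multi-scale sub-slicing, or ``self-similarity forces concentration'') is the mechanism the argument actually uses.

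The ingredient you are missing is a deterministic \emph{floor}, not a ceiling: because convolution cannot decrease entropy with respect to the interval partitions (a standard $O(1)$-robust fact), each $\nu_I=\mu_I^{(m)}*S_{r^m}\mu^{(n)}$ satisfies
\[
H(\nu_I,\mathcal{I}_{r^{m+n}})\;\geq\;H(S_{r^m}\mu^{(n)},\mathcal{I}_{r^{m+n}})-O(1)\;=\;H(\mu^{(n)},\mathcal{I}_{r^n})-O(1)\;=\;s_n-O(1)\;=\;(\alpha-o(1))\,n\log(1/r),
\]
where the middle equality is scale invariance ($S_{r^m}$ carries $\mathcal{I}_{r^n}$ to $\mathcal{I}_{r^{m+n}}$). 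Now apply Markov to the nonnegative random variable $H(\nu_I,\mathcal{I}_{r^{m+n}})-(s_n-O(1))$, whose $\mu^{(m)}$-mean is $A-s_n+O(1)=o(n)$: the mass of $I$ with $H(\nu_I,\mathcal{I}_{r^{m+n}})>(\alpha+\delta)\,n\log(1/r)$ is at most $o(n)/(\delta\,n\log(1/r))=o(1)$. This lower bound on each $\nu_I$ is precisely what is invisible in the covering-number model of Proposition~\ref{prop:local-sumset} (there one only gets an existence statement, a single good $J_m$); it is the place where the entropy formulation is genuinely stronger, and it is what turns your constant-fraction Markov bound into $1-o(1)$.
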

From the last two propositions one sees that for given $\delta>0$
and large enough $m$, there are intervals $I=I_{m}\in\mathcal{I}_{r^{m}}$
that appear in the unions in the conclusions of both propositions.
Taking $\nu_{m}$ to be the re-scaling of $\mu_{I}^{(m)}$ by $r^{-m}$
(translated back to $[0,1)$), we have the rigorous analog of ``Proposition''
\ref{prop:small-sumset}:
\begin{prop}
\label{prop:small-convolution}There is a $\tau>0$ such that for
every $\delta>0$, for all sufficiently large $m$, there is a measure
$\nu_{m}$ supported on $[0,1)$ with 
\begin{eqnarray}
\frac{1}{m}H_{r^{m}}(\nu_{m}) & > & \tau\label{eq:lerge-entropy}\\
\frac{1}{m}H_{r^{m}}(\mu^{(m)}*\nu_{m}) & < & \frac{1}{m}H_{r^{m}}(\mu^{(m)})+\delta\label{eq:small-convolution}
\end{eqnarray}

\end{prop}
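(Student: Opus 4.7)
The plan is to mirror the ``proof'' of ``Proposition'' \ref{prop:small-sumset}, replacing covering numbers with entropies throughout. Propositions \ref{prop:entropy-large-fibers} and \ref{prop:local-convolution} are the entropy analogs of Propositions \ref{prop:fiber-covering-numbers} and \ref{prop:local-sumset}, and they have been tailored precisely so that the conclusion reduces to a pigeonhole argument on intervals of scale $r^m$.

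Fix $\delta>0$ and let $n=km$, where $k$ is the constant from the hypothesis $\Delta_n\geq 2^{-kn}$. Proposition \ref{prop:entropy-large-fibers} produces a constant $c>0$ (depending only on $\alpha$ and $\beta$) and, for every sufficiently large $m$, a family
\[
\mathcal{G}_1\;\subseteq\;\bigl\{I\in\mathcal{I}_{r^m}:H_{r^{m+n}}(\mu^{(m)}_I)>cm\bigr\}
\]
with $\mu^{(m)}\bigl(\bigcup\mathcal{G}_1\bigr)>c$. Choosing $\delta'>0$ small (depending on $\delta$, $\alpha$, $k$), Proposition \ref{prop:local-convolution} then produces a family
\[
\mathcal{G}_2\;\subseteq\;\Bigl\{I\in\mathcal{I}_{r^m}:\tfrac{1}{n}H_{r^{m+n}}\bigl(\mu^{(m)}_I*S_{r^m}\mu^{(n)}\bigr)\leq\alpha+\delta'\Bigr\}
\]
with $\mu^{(m)}\bigl(\bigcup\mathcal{G}_2\bigr)\geq 1-o_m(1)$. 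For $m$ large we have $(1-o_m(1))+c>1$, so $\mathcal{G}_1\cap\mathcal{G}_2\neq\emptyset$; pick any $I_m$ in this intersection and let $a_m$ be its left endpoint.

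Define
\[
\nu_m=\bigl(x\mapsto r^{-m}(x-a_m)\bigr)_*\mu^{(m)}_{I_m},
\]
a probability measure supported on $[0,1)$. Entropy is invariant under translation and transforms under scaling by $H(S_t\eta,\mathcal{I}_{t\varepsilon})=H(\eta,\mathcal{I}_\varepsilon)+O(1)$, while convolution commutes with $S_t$. Using these identities, the lower bound $H_{r^{m+n}}(\mu^{(m)}_{I_m})>cm$ becomes a lower bound for the entropy of $\nu_m$ at scale $r^n$; and the upper bound $H_{r^{m+n}}(\mu^{(m)}_{I_m}*S_{r^m}\mu^{(n)})\leq(\alpha+\delta')n$, combined with the convergence $H_{r^{m+n}}(\mu^{(m+n)})\to \alpha$, yields the desired upper bound for the convolution $\nu_m*\mu^{(n)}$ at scale $r^n$. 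After re-indexing so that the current $n$ plays the role of the index ``$m$'' in the statement of the proposition, these estimates give (\ref{eq:lerge-entropy}) and (\ref{eq:small-convolution}) with $\tau$ a positive constant expressible in terms of $c$ and $k$ (essentially $c/(k+1)$).

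The main obstacle is not conceptual but notational: three scales are present at once---the coarse scale $r^m$ of the conditioning intervals, the fine scale $r^{m+n}$ at which the entropy estimates live, and the intrinsic scale $r^n$ at which the rescaled measure $\nu_m$ naturally lives---and one must track their relationships carefully through the change of variables. The genuine combinatorial work, namely producing the two families of ``good'' intervals, has been packaged in advance into Propositions \ref{prop:entropy-large-fibers} and \ref{prop:local-convolution}, and what remains is a pigeonhole step followed by the transfer of estimates under rescaling.
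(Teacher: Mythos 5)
Your proof matches the paper's own (sketched) derivation: the paper likewise takes the pigeonhole intersection of the two ``good'' families of $r^{m}$-intervals produced by Propositions \ref{prop:entropy-large-fibers} and \ref{prop:local-convolution}, defines $\nu_{m}$ as the translated, $r^{-m}$-rescaled conditional measure $\mu^{(m)}_{I}$, and transfers the two entropy estimates through the scaling identity, with the same re-indexing. You have simply filled in the details the paper leaves implicit; the approach and all constants (including $\tau$ of the order $c/k$) agree.
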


\subsection{\label{sub:Getting-a-contradiction-really}Getting a contradiction}

The missing ingredient in Section \ref{sub:Getting-a-contradiction}
was the ability to estimate the number of leaves of a tree from the
average amount of branching of its nodes. This is where entropy really
comes in handy, because of the following (easy!) lemma. Recall that
given a tree-measure $\theta$, we write $\theta_{\sigma,m}$ for
the normalized weights on the nodes $m$ generations down from $\sigma$. 
\begin{lem}
[Lemma 3.4 of \cite{Hochman2012a}] \label{prop:entorpy-averages}Let
$\widetilde{\theta}$ be a tree-measure on the full binary tree $T$
of height $h$. Write $\partial\widetilde{\theta}$ for the measure
induced by $\widetilde{\theta}$ on the leaves of the tree. Then for
any $m$,
\[
\frac{1}{h}H(\partial\widetilde{\theta})=\mathbb{E}_{\sigma\sim\widetilde{\theta}}(\frac{1}{m}H(\widetilde{\theta}_{\sigma,m}))+O(\frac{m}{h})
\]

\end{lem}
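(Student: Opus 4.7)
The plan is to recognize that for each individual level $k$, the inner sum $\sum_{\sigma \in \{0,1\}^k} \widetilde{\theta}(\sigma)H(\widetilde{\theta}_{\sigma,m})$ is precisely the conditional-entropy increment $H(\widetilde{\theta}|_{k+m}) - H(\widetilde{\theta}|_k)$ supplied by the chain rule, and then to telescope these increments across the tree. Writing $H_k := H(\widetilde{\theta}|_k)$ for the entropy of the level-$k$ marginal of $\widetilde{\theta}$, so that $H_0 = 0$ and $H_h = H(\partial\widetilde{\theta})$, the chain rule applied to the distribution of the first $k+m$ symbols of a random leaf yields
\[
\sum_{\sigma \in \{0,1\}^k} \widetilde{\theta}(\sigma)\, H(\widetilde{\theta}_{\sigma,m}) \;=\; H_{k+m} - H_k,
\]
valid whenever $k+m \leq h$, with the natural modification $H_h - H_k$ when $k+m > h$ (interpreting $\widetilde{\theta}_{\sigma,m}$ for such $\sigma$ as the conditional measure on the at most $h-k$ existing generations of descendants).

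Substituting into the definition of the expectation and denoting $\widetilde{H}_j := H_{\min(j,h)}$, the right-hand side of the lemma becomes
\[
\mathbb{E}_{\sigma\sim\widetilde{\theta}}\!\left(\tfrac{1}{m}H(\widetilde{\theta}_{\sigma,m})\right) \;=\; \frac{1}{(h+1)m}\sum_{k=0}^{h}\bigl(\widetilde{H}_{k+m} - H_k\bigr).
\]
The inner sum telescopes under the shift $k \mapsto k+m$: splitting and re-indexing,
\[
\sum_{k=0}^h \widetilde{H}_{k+m} \;=\; \sum_{j=m}^{h} H_j + mH_h \quad\text{and}\quad \sum_{k=0}^h H_k \;=\; \sum_{j=0}^{h} H_j,
\]
so their difference equals $mH_h - \sum_{j=0}^{m-1} H_j$. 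Using the crude bound $H_j \leq j\log 2$, the residual sum is $O(m^2)$, and dividing by $(h+1)m$ gives $\tfrac{H_h}{h+1} + O(m/h)$. Finally $\tfrac{H_h}{h+1} = \tfrac{H_h}{h} + O(1/h)$ since $H_h \leq h\log 2$, so the whole thing matches $\tfrac{1}{h}H(\partial\widetilde{\theta})$ up to an $O(m/h)$ error.

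The only technical wrinkle is deciding how to interpret $\widetilde{\theta}_{\sigma,m}$ at levels $k > h - m$ where fewer than $m$ generations remain; whichever natural convention one adopts (truncating to the available descendants, or simply discarding these terms) shifts the unnormalized sum by at most $O(m^2)$, which is absorbed into the stated $O(m/h)$ error. In short, the proof is just the chain rule plus a telescoping sum and a boundary estimate -- no self-similarity, no additive combinatorics, and no inverse theorems are involved.
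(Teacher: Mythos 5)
Your proof is correct and is exactly the standard argument for this lemma (which the paper only cites, describing it as easy): the chain rule identifies each level's contribution as an entropy increment $H_{k+m}-H_k$, the sum telescopes up to $m$ boundary terms each of size $O(h)$ at the top and $O(m)$ at the bottom, and these are absorbed into the $O(m/h)$ error after normalizing by $(h+1)m$. Your handling of the truncation at levels $k>h-m$ is the right way to resolve the only ambiguity in the statement.
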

From here the argument proceeds exactly as in Section \ref{sub:Getting-a-contradiction}.
Let $\tau>0$ be the constant provided by Proposition \ref{prop:small-convolution}.
Choose a small parameter $\varepsilon>0$. Choose $m$ large enough
that
\[
H_{r^{m}}(\mu^{(m)})\geq(1-\varepsilon)\alpha
\]
Apply the inverse theorem \ref{thm:inverse-thm-entropy} with parameters
$\varepsilon,m$ and let $\delta>0$ be the resulting number. Applying
Proposition \ref{prop:small-convolution} with this $\delta$, there
exist probability measures $\nu_{n}$ on $[0,1]$ satisfying (\ref{eq:lerge-entropy})
and (\ref{eq:small-convolution}). Write $\widetilde{\mu}^{(n)},\widetilde{\nu}_{n}$
for the lift of $\mu^{(n)},\nu_{n}$, respectively, to the binary
tree of height $h_{n}=\left\lceil 1/\log(r^{n})\right\rceil $. By
the inverse theorem there is a partition $U_{n}\cup V_{n}\cup W_{n}$
of the levels $\{1,\ldots,h_{n}\}$ such that
\begin{enumerate}
\item [(I)] At scales $i\in U_{n}$, the $\widetilde{\mu}^{(n)}$-mass
of nodes at level $i$ that are $\varepsilon$-uniform for $m$ generations
is at least $1-\varepsilon$.
\item [(II)] At scales $j\in V_{n}$, the $\widetilde{\nu}_{n}$-mass of
nodes at level $i$ that are $\varepsilon$-concentrated for $m$
generations is at least $1-\varepsilon$
\item [(III)] $|W_{n}|\leq\varepsilon h_{n}$.
\end{enumerate}
If $|V_{n}|>(1-\tau/2)h_{n}$ and $\varepsilon$ is small enough compared
to $\tau$, then sufficiently many nodes (with respect to $\widetilde{\nu}_{n}$)
would have $H(\widetilde{\nu}_{\sigma,m}^{n})<\varepsilon$ that we
could invoke Lemma \ref{prop:entorpy-averages} and conclude that
the entropy $H_{r^{n}}(\nu_{n})\approx\frac{1}{n\log(1/r)}H(\widetilde{\nu}^{n})<\tau$,
contradicting (\ref{eq:lerge-entropy}). Therefore $|V_{n}|\leq(1-\tau/2)h_{n}$.
In particular, assuming $\varepsilon$ is small enough compared to
$\tau$, 
\[
|U_{n}|\geq h_{n}-|V_{n}|-|W_{n}|\geq\frac{\tau}{3}h_{n}
\]
Next, suppose that $m$ is large enough so that $H_{r^{n}}(\mu^{(n)})>(1-\varepsilon)\alpha$.
Using self-similarity of $X$ and an argument analogous to the one
outlined in Proposition \ref{prop:SSSs-have-uniform-branching}, we
get the analogous result:
\begin{lem}
[Lemma 5.4 of \cite{Hochman2012a}]\label{lem:SSMs-are-entropy-uniform}For
all large enough $n$,
\[
\frac{1}{h_{n}+1}\sum_{\sigma}\left\{ \widetilde{\mu}^{(n)}(\sigma)\,:\,\frac{1}{m}H(\widetilde{\mu}_{\sigma,m}^{(n)})>(1-2\varepsilon)\alpha\right\} >1-\varepsilon
\]

\end{lem}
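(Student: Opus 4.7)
The plan is to follow the heuristic of Proposition \ref{prop:SSSs-have-uniform-branching}, but carried out in the entropy framework. Write $\pi_\ell$ for the projection onto level $\ell$ (i.e.\ the push-forward through the partition $\mathcal{I}_{2^{-\ell}}$). Since $\mathbb{E}_{\sigma \sim \widetilde\mu^{(n)}}(\tfrac{1}{m}H(\widetilde\mu^{(n)}_{\sigma,m})) = \tfrac{1}{h_n+1}\sum_\ell \tfrac{1}{m}(H_{2^{-\ell-m}}(\mu^{(n)}) - H_{2^{-\ell}}(\mu^{(n)}))\cdot \log 2$, after normalization, the target inequality amounts to a lower bound on the average normalized entropy increment from scale $2^{-\ell}$ to scale $2^{-\ell-m}$. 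It suffices to show that for each $\ell$ in a $(1-\tfrac{\varepsilon}{2})$-fraction of levels below $(1-\tfrac{\varepsilon}{2})h_n$, the increment at level $\ell$ is at least $(1-2\varepsilon)\alpha m \log 2$ up to $o(m)$ errors.

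First I would exploit self-similarity. Fix a large auxiliary parameter $m_0$, and for each level $\ell$ choose $u = u(\ell)$ with $r^u \approx 2^{-\ell - m_0}$. By the convolution identity $\mu^{(n)} = \mu^{(u)} * S_{r^u}\mu^{(n-u)}$, we can write $\mu^{(n)}$ as an average of translates $\delta_x * S_{r^u}\mu^{(n-u)}$ over $x$ drawn from $\mu^{(u)}$. Each such translate is supported on an interval of length $r^u \approx 2^{-\ell}/2^{m_0}$, which is much smaller than a level-$\ell$ cell. Therefore, a Fubini/averaging argument shows that the $\mu^{(u)}$-mass of points $x$ for which the support of the translate $x + r^u X_{n-u}$ lies in a single level-$\ell$ cell is at least $1 - O(2^{-m_0})$; call this the set of ``good'' $x$, and call a level-$\ell$ cell $I$ ``good'' if it contains such a translate. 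Good cells carry at least $1-O(2^{-m_0})$ of the total mass.

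Next I would bound entropy from below on good cells. Restricted to a good cell $I$, the measure $\mu^{(n)}_I$ dominates (up to a factor controlled by $\mu^{(u)}(\{x \text{ good into } I\})$) a scaled translate of $\mu^{(n-u)}$. Because entropy is concave and translation/scaling invariant in the appropriate normalized sense, and because $H(\mu*\nu,\mathcal{I}_\varepsilon) \geq H(\nu, \mathcal{I}_\varepsilon) - O(1)$ conditional on the location in a single cell, we get
\[
H(\mu^{(n)}_I, \mathcal{I}_{2^{-\ell-m}}) \geq H(\mu^{(n-u)}, \mathcal{I}_{2^{-\ell-m}/r^u}) - O(1).
\]
Choosing $m$ large relative to $m_0$ so that $2^{-\ell-m}/r^u \approx 2^{-(m - m_0)}$ corresponds to about $m - m_0$ binary scales, this is bounded below by $H_{r^{n-u}}(\mu^{(n-u)})\cdot(m-m_0)\log 2$, and since $H_{r^{n-u}}(\mu^{(n-u)}) \to \alpha$ for $n - u$ large (which holds once $n$ is large relative to $\ell$), the right side is at least $(1-\varepsilon)\alpha(m - m_0)\log 2$.

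Finally I would average: for each level $\ell \leq (1-\tfrac{\varepsilon}{2})h_n$, the good cells contribute entropy increment $\geq (1-2\varepsilon)\alpha\, m \log 2$ once $m_0$ is chosen small relative to $\varepsilon m$ and once the $O(2^{-m_0})$ mass of bad cells is absorbed, and the top $\tfrac{\varepsilon}{2}h_n$ levels contribute a negligible fraction of the average. Markov's inequality then converts this average bound into the stated pointwise-in-$\sigma$ statement with exceptional mass $<\varepsilon$. The main obstacle is the second step: converting the geometric statement ``the level-$\ell$ cell contains a scaled copy of $X_{n-u}$'' into a clean lower bound on $H(\mu^{(n)}_I,\mathcal{I}_{2^{-\ell-m}})$ without losing constants, since one has to account for the contribution of the offset coming from $\mu^{(u)}$ restricted to the cell and to line up the discretization scale $2^{-\ell - m}$ with the natural scale $r^{n-u}$ of $\mu^{(n-u)}$; this is the bookkeeping that Lemma 5.4 of \cite{Hochman2012a} carries out.
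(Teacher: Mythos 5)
Your overall strategy matches the paper's, which itself only refers to Lemma 5.4 of \cite{Hochman2012a} and to the sketch in Proposition \ref{prop:SSSs-have-uniform-branching}: decompose $\mu^{(n)}=\mu^{(u)}*S_{r^{u}}\mu^{(n-u)}$ at a scale roughly $m_{0}$ generations below level $\ell$, observe via Fubini that for all but $O(2^{-m_{0}})$ of the $\mu^{(u)}$-mass the translate $\delta_{x}*S_{r^{u}}\mu^{(n-u)}$ lands entirely inside one level-$\ell$ cell, and use entropy concavity and scale invariance to transfer the entropy $\approx\alpha$ of $\mu^{(n-u)}$ down to the conditional measure on the cell. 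The decomposition, the choice of $u$, and the rescaling all line up with the intended argument, and you correctly identify where the accounting is delicate.

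The concrete gap is in your third step, and it is more than bookkeeping. You claim that for any ``good'' cell $I$ (one that contains at least one full translate) one has $H(\mu_{I}^{(n)},\mathcal{I}_{2^{-\ell-m}})\geq H(\mu^{(n-u)},\mathcal{I}_{2^{-\ell-m}/r^{u}})-O(1)$. This is false as stated: $\mu^{(n)}_I$ is a convex combination $\sum_x w(x,I)\,(\delta_x*S_{r^u}\mu^{(n-u)})_{I}$, and concavity only gives a lower bound proportional to the \emph{fraction} $w(I):=\sum_{x\text{ good into }I}w(x,I)$ of the cell's mass coming from good translates; a cell can contain one good translate and yet have almost all of its mass coming from fragments of translates straddling its boundary, in which case the conditional entropy can be small. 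The correct route is to note that $\sum_I\mu^{(n)}(I)\,(1-w(I))=O(2^{-m_0})$ and apply Markov's inequality to $1-w(I)$, concluding that all but $O(2^{-m_0}/\varepsilon)$ of the $\mu^{(n)}$-mass lies on cells with $w(I)>1-\varepsilon/4$, and only for those cells does concavity give $\frac{1}{m}H(\widetilde\mu^{(n)}_{\sigma,m})>(1-2\varepsilon)\alpha$. Relatedly, the Markov step you invoke at the end cannot be applied to the entropy average itself: $\frac{1}{m}H(\widetilde\mu^{(n)}_{\sigma,m})$ is bounded by $\log 2$, which can be much larger than $\alpha$, so an average $\approx\alpha$ alone is compatible with half the mass sitting at $0$ and the conclusion would fail. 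Markov has to be applied to the per-cell good-mass fraction, not to the entropy.
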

Now, from the definition of $U_{n}$ and our bound $|U_{n}|\geq\frac{\tau}{3}h_{n}$,
we know that at least a $(1-\varepsilon)\tau/3$-fraction of the nodes
of $\widetilde{\mu}^{(n)}$ satisfy $\frac{1}{m}H(\widetilde{\mu}_{\sigma,m}^{(n)})>(1-\varepsilon)m$.
Of the remaining nodes, by the last lemma all but a $\varepsilon$-fraction
satisfy $\frac{1}{m}H(\widetilde{\mu}_{\sigma,m}^{(n)})\geq(1-2\varepsilon)\alpha$.
Therefore by Lemma \ref{prop:entorpy-averages} again, for all large
enough $n$, 
\[
H_{r^{n}}(\mu^{(n)})\approx\frac{1}{h_{n}}H(\widetilde{\mu}^{(n)})>(1-\varepsilon)^{2}\frac{\tau}{3}+(1-(1-\varepsilon)\frac{\tau}{3}-\varepsilon)\alpha>\alpha+\varepsilon
\]
assuming $\varepsilon$ is small compared to $\tau$. This contradicts
the definition of $\alpha$.

\bibliographystyle{plain}
\bibliography{bib}

\end{document}